\documentclass[12pt,reqno,oneside,a4paper]{amsart}
\usepackage[british]{babel}

\usepackage{graphicx}
\usepackage{amscd}
\usepackage{amsmath}
\usepackage{amsfonts}
\usepackage{amssymb}
\usepackage{comment}
\usepackage{color}
\usepackage[usenames,dvipsnames,table,xcdraw]{xcolor}
\usepackage{pdfsync}
\usepackage{bbm}
\usepackage{dsfont}
\usepackage[colorlinks=true]{hyperref}
\usepackage{enumerate}
\usepackage{booktabs}
\usepackage{float}
\usepackage{wrapfig}
\usepackage{caption}
\usepackage{subcaption}
\usepackage{longtable}
\usepackage{listings}
\usepackage[T1]{fontenc}
\usepackage[scaled]{beramono}
\usepackage{tikz}
\usepackage{pgffor}
\usepackage[all]{xy}

\definecolor{trp}{rgb}{1,1,1}

\definecolor{red}{rgb}{1,0,.2}

\newtheorem{theorem}{Theorem}[section]
\theoremstyle{plain}

\newtheorem{definition}[theorem]{Definition}

\newtheorem{question}[theorem]{Question}

\newtheorem{lemma}[theorem]{Lemma}

\newtheorem{prop}[theorem]{Proposition}
\newtheorem{remark}[theorem]{Remark}

\numberwithin{equation}{section}

\newcommand{\R}{\mathbb{R}}
\newcommand{\N}{\mathbb{N}}

\newcommand{\ii}{\mathbf{i}}
\newcommand{\jj}{\mathbf{j}}

\newcommand{\iih}{\boldsymbol{\hat\imath}}
\newcommand{\jjh}{\boldsymbol{\hat\jmath}}

\newcommand{\iiv}{\overline{\imath}}

\newcommand{\iu}{\underline{i}}
\newcommand{\ju}{\underline{j}}



\linespread{1.05}

\usepackage{anysize}

\usepackage{caption}



\definecolor{blue}{rgb}{0,0,1}

\definecolor{red}{rgb}{1,0,.7}

\begin{document}
\title[Calculating box dimension with the method of types]{Calculating box dimension with the method of types}

\author{Istv\'an Kolossv\'ary}
\address{Istv\'an Kolossv\'ary, \newline University of St Andrews,  School of Mathematics and Statistics, \newline St Andrews, KY16 9SS, Scotland} \email{itk1@st-andrews.ac.uk}

\thanks{2020 {\em Mathematics Subject Classification.} Primary 28A80 Secondary 37C45
\\ \indent
{\em Key words and phrases.} box dimension, method of types, self-affine sponge, Ledrappier--Young formula}

\begin{abstract}
This paper presents a general procedure based on using the method of types to calculate the box dimension of sets. The approach unifies and simplifies multiple box counting arguments. In particular, we use it to generalize the formula for the box dimension of self-affine carpets of Gatzouras--Lalley and of Bara\'nski type to their higher dimensional sponge analogues. In addition to a closed form, we also obtain a variational formula which resembles the Ledrappier--Young formula for Hausdorff dimension.
\end{abstract}

\maketitle

\thispagestyle{empty}
\section{Introduction}\label{sec:01}

The \emph{box dimension} of a subset $\Lambda$ of $\R^d$ is defined as the limit
\begin{equation*}
\dim_{\mathrm B}\Lambda  = \lim_{\delta\to 0} \frac{\log N_{\delta}(\Lambda)}{-\log \delta},
\end{equation*}
where $N_{\delta}(\Lambda)$ denotes the minimum number of $d$-dimensional boxes of sidelength $\delta$ needed to cover $\Lambda$. More precisely, one takes the $\liminf$ and $\limsup$ to get the lower and upper box dimensions, respectively, but for all sets considered in this paper the limit exists. The main aim of this paper is to provide a unified approach based on the `method of types' to calculate the box dimension. The effectiveness of the argument is demonstrated on various families of self-affine sponges in $\R^d$. Thanks to the flexibility of the method, one can hope to apply it to more complicated constructions in the future and gain additional insight as to when does the Hausdorff and box dimension of a set differ.  

The outline of the general argument goes as follows. Assume that at scale $\delta$ we are given a collection $\mathcal{B}_\delta$ of sets of diameter $\delta$ that is a cover of $\Lambda$ with cardinality $\#\mathcal{B}_\delta=N_{\delta}(\Lambda)$. The first step is to partition $\mathcal{B}_\delta$ into type classes according to some rule. Let $\mathcal{T}_{\delta}$ denote the set of all possible types and $T_\delta^*$ be the class with the most elements. Then    
\begin{equation}\label{eq:00}
\#T_\delta^* \leq N_{\delta}(\Lambda) \leq \#T_\delta^* \cdot \#\mathcal{T}_{\delta}.
\end{equation}
If $\#\mathcal{T}_{\delta}=o(\delta^{-1})$ and $\#T_\delta^*$ has lower and upper bounds such that after taking logarithm, dividing by $-\log \delta$ and letting $\delta\to 0$ we get the same limit for the lower and upper bound, then the growth rate of $\#T_\delta^*$ essentially determines $\dim_{\mathrm B}\Lambda$. We refer to $T_\delta^*$ as the \emph{dominant box counting class} at scale $\delta$ and the type it corresponds to as the \emph{dominant box counting type}. The optimal $\delta$-cover of all sets considered here have a clear symbolic representation which allows us to apply the method of types with proper adaptations.  



The \emph{method of types} is an elementary tool to give good estimates for the number of sequences of a given length with prescribed digit frequencies where the digits come from a finite alphabet. It has roots dating back to works of Boltzmann, Hoeffding, Sanov or Shannon to name a few. It was later systematically developed to study discrete memoryless systems in information theory and has since found applications in for example hypothesis testing, combinatorics, or large deviations, see \cite{Bremaud2017MethodofTypes, Csiszar_98MethodofTypes} for some background. 

Let us recall the basic notions and facts from the method of types that we will use. Let $\mathcal{I}=\{1,\ldots,N\}$ be the finite alphabet and $\Sigma=\mathcal{I}^{\N}$ be the set of all infinite sequences $\ii=(i_1i_2\ldots)$. For any $n\in\N$, we use the notation $\ii|n=i_1\ldots i_n$. 

The \emph{type of $\ii$ at level $n$} is the empirical vector
\begin{equation*}
\tau_n(\ii) = \frac{1}{n}\big( \#\{1\leq\ell\leq n:\, i_{\ell}=1\}, \ldots, \#\{1\leq\ell\leq n:\, i_{\ell}=N\}  \big),
\end{equation*}
that is $\tau_n(\ii)$ just tabulates the relative frequency of each symbol of $\mathcal{I}$ in $\ii|n$. The set of all possible types at level $n$ is
\begin{equation*}
\mathcal{T}_n=\big\{\mathbf{p}: \text{ there exists } \ii\in\Sigma \text{ such that } \mathbf{p}=\tau_n(\ii)\big\}.
\end{equation*}
Let $\mathcal{P}_N$ denote all probability vectors $\mathbf{p}=(p_1,\ldots,p_N)$. Observe that as $n\to \infty$ the set $\mathcal{T}_n$ becomes dense in $\mathcal{P}_N$. A cylinder set is defined as $[i_1\ldots i_n] = \{\jj\in\Sigma:\, \jj|n=(i_1\ldots i_n)\}$. Then $\mathcal{B}_n=\{[i_1\ldots i_n]:\, (i_1,\ldots,i_n)\in\mathcal{I}^n\}$ gives a partition of $\Sigma$. We simply identify the elements of $\mathcal{B}_n$ with finite sequences $(i_1,\ldots,i_n)$. The \emph{type class of} $\mathbf{p}\in \mathcal{T}_n$ is the set
\begin{equation*}
T_n(\mathbf{p}) = \big\{(i_1,\ldots, i_n)\in\mathcal{B}_n:\, \tau_n((i_1,\ldots,i_n))=\mathbf{p}\big\}.
\end{equation*} 
Throughout, we will only use the following two simple facts from the method of types:
\begin{equation}\label{eq:01}
\# \mathcal{T}_n \leq (n+1)^N
\end{equation}
and
\begin{equation}\label{eq:02}
(n+1)^{-N} e^{n H(\mathbf{p})} \leq \#T_n(\mathbf{p}) \leq e^{n H(\mathbf{p})}
\end{equation}
for every $\mathbf{p}\in\mathcal{T}_n$, where $H(\mathbf{p}) = -\sum_i p_i\log p_i$ is the entropy of the probability vector $\mathbf{p}$. For a proof of these elementary facts, we refer to \cite[Lemmas 2.1.2 and 2.1.8]{DemboZeitouniLDP}. Inequality~\eqref{eq:01} implies that it is indeed enough to consider the dominant box counting class, while~\eqref{eq:02} ensures that we get matching lower and upper bounds for $\dim_{\mathrm B}\Lambda$. 

\subsection*{Main contribution}
The idea of picking out classes of words from a code space in some optimal way has been used before, however, the author is unaware of it being formalised in such a general context previously to calculate the box dimension. The main result is to determine the box dimension of Gatzouras--Lalley and of Bara\'nski sponges in arbitrary dimensions. The key technical contribution is to adapt~\eqref{eq:01} and~\eqref{eq:02} to more complicated settings where multi-dimensional types are used for sequences of varying lengths.

\subsection*{Structure of paper}
We begin by demonstrating the skeleton of the argument in the simplest case of self-similar sets satisfying the open set condition which we later build upon. Section~\ref{sec:2} provides a brief overview of related literature on dimensions of self-affine sponges and states our main results, see Theorems~\ref{thm:main} and~\ref{thm:main2}.  The proofs are presented in Sections~\ref{sec:3} and~\ref{sec:4}. In Section~\ref{sec:5}, we discuss possible generalizations of the approach and connections with the Ledrappier--Young formula for the Hausdorff dimension. 

\subsection{Self-similar sets}\label{sec:11}
In general, an \emph{iterated function system} (IFS) on $\R^d$ is a finite family $\mathcal{S}=\{S_1,\ldots,S_N\}$ of contractions $S_i:\, \R^d\to\R^d$. The IFS determines a unique, non-empty compact set $\Lambda$, called the \emph{attractor}, that satisfies the relation
\begin{equation*}
\Lambda = \bigcup_{i=1}^N S_i(\Lambda).
\end{equation*}
In particular, if the maps are similarities, i.e. for every $x,y\in\R^d$
\begin{equation*}
\|S_i(x)-S_i(y)\| = \lambda_i \|x-y\|, \;\text{ where } 0<\lambda_i<1 \text{ is the contraction ratio of } S_i, 
\end{equation*}
then the IFS and its attractor are called \emph{self-similar}. The IFS satisfies the \emph{open set condition} (OSC) if there exists a non-empty open set $V$ such that
\begin{equation}\label{eq:06}
S_i(V) \subseteq V \text{ and } S_i(V)\cap S_j(V) = \emptyset \text{ for } i\neq j.
\end{equation}
It is well-known that a self-similar set has equal Hausdorff and box dimension, moreover, if the OSC is also satisfied then the dimension is given by the Hutchinson formula, i.e. the unique solution $s$, often called the \emph{similarity dimension}, to the equation
\begin{equation}\label{eq:07}
\sum_{i=1}^N \lambda_i^s=1.
\end{equation}
We now sketch the argument for deriving the box dimension using the method of types.

Let $\lambda_{\min} := \min_i \lambda_i,\, \lambda_{\max} := \max_i \lambda_i$ and denote the Lyapunov-exponent with respect to $\mathbf{p}$ by $\chi(\mathbf{p}):=-\sum_ip_i\log \lambda_i$. Throughout, we use the convention that $a\lessapprox b$ if there is an independent constant $C$ such that $a\leq Cb$, similarly $a\gtrapprox b$ if $a\geq Cb$ and $a\approx b$ if $a\lessapprox b$ and $a\gtrapprox b$. The set of finite length words from the alphabet $\mathcal{I}=\{1,\ldots,N\}$ is denoted by $\Sigma^*$ and the length of $\iiv\in\Sigma^*$ is $|\iiv|$.

On the symbolic space $\Sigma$, the \emph{$\delta$-stopping of $\ii\in\Sigma$} is the unique integer $L_{\delta}(\ii)$ such that
\begin{equation}\label{eq:03}
\prod_{\ell=1}^{L_{\delta}(\ii)} \lambda_\ell \leq \delta < \prod_{\ell=1}^{L_{\delta}(\ii)-1}\lambda_\ell, \;\text{ i.e. } L_{\delta}(\ii) \approx \frac{\log \delta}{\frac{1}{L_{\delta}(\ii)} \sum_{\ell=1}^{L_{\delta}(\ii)} \log \lambda_\ell}.
\end{equation}
The \emph{symbolic $\delta$-approximate ball} containing $\ii\in\Sigma$ is
\begin{equation*}
B_\delta(\ii) = \big\{ \jj\in\Sigma:\, \ii|L_{\delta}(\ii) = \jj|L_{\delta}(\ii) \big\},
\end{equation*}
which we identify with the finite sequence $(i_1,i_2,\ldots,i_{L_{\delta}(\ii)})$. The name comes from the fact that the image $\pi(B_\delta(\ii))$ on $\Lambda$ has diameter $\approx \delta$, where $\pi:\Sigma\to\Lambda$ is the natural projection defined by
\begin{equation*}
\pi(\ii) = \lim_{n\to\infty} S_{i_1}\circ S_{i_2}\circ\ldots\circ S_{i_n}(0).
\end{equation*}
The symbolic Moran-cover of $\Sigma$ at scale $\delta$ is $\mathcal{B}_\delta = \big\{\iiv\in\Sigma^*:\, (\forall\, \ii\in[\iiv])\, L_{\delta}(\ii) = |\iiv|\big\}$. It is straightforward that $\{[\iiv]\}_{\iiv\in\mathcal{B}_\delta}$ is a partition of $\Sigma$. Since $\pi$ is surjective, the collection $\{\pi(B_\delta(\iiv))\}_{\iiv\in\mathcal{B}_\delta}$ gives a $\delta$-cover of $\Lambda$. Moreover, the OSC implies that $N_{\delta}(\Lambda)\approx \# \mathcal{B}_\delta$. As a result, it is enough to work with the finite sequences $\iiv\in\mathcal{B}_\delta$.

Since $L_\delta(\ii)$ depends on $\ii$, we adapt the method of types to handle sequences of different lengths simultaneously. Similarly as before, the \emph{type of $\ii\in\Sigma$ at scale $\delta$} is the empirical vector
\begin{equation*}
\tau_{\delta}(\ii) = \frac{1}{L_{\delta}(\ii)}\big( \#\{1\leq\ell\leq L_{\delta}(\ii):\, i_{\ell}=1\}, \ldots, \#\{1\leq\ell\leq L_{\delta}(\ii):\, i_{\ell}=N\}  \big).
\end{equation*}
The set of all possible types at scale $\delta$ is
\begin{equation*}
\mathcal{T}_\delta=\big\{\mathbf{p}: \text{ there exists } \iiv\in\mathcal{B}_\delta \text{ such that } \mathbf{p}=\tau_\delta(\iiv)\big\}
\end{equation*}
and the \emph{type class of} $\mathbf{p}\in \mathcal{T}_\delta$ is the set
\begin{equation*}
T_\delta(\mathbf{p}) = \big\{\iiv\in\mathcal{B}_\delta:\, \tau_\delta(\iiv)=\mathbf{p}\big\}.
\end{equation*} 
For fixed $\mathbf{p}\in\mathcal{T}_{\delta}$, observe that within its type class it follows from~\eqref{eq:03} that $L_{\delta}(\ii)\approx -\log \delta / \chi(\mathbf{p})$ for all $\ii$ such that $\ii\in[\iiv]$ for some $\iiv\in T_{\delta}(\mathbf{p})$. Thus, \eqref{eq:02} implies that
\begin{equation}\label{eq:04}
\left(\frac{-\log \delta}{\chi(\mathbf{p})}+1\right)^{-N} \delta^{-H(\mathbf{p})/\chi(\mathbf{p})} \lessapprox \#T_\delta(\mathbf{p}) \lessapprox \delta^{-H(\mathbf{p})/\chi(\mathbf{p})}.
\end{equation}
To bound $\#\mathcal{T}_\delta$ from above, note that $\log \delta / \log \lambda_{\min} \lessapprox L_{\delta}(\ii) \lessapprox \log \delta / \log \lambda_{\max}$. Then from~\eqref{eq:01} the following crude upper bound follows
\begin{equation}\label{eq:05}
\#\mathcal{T}_\delta \lessapprox \left(\frac{1}{\log \lambda_{\max}} - \frac{1}{\log \lambda_{\min}}\right)\log \delta \cdot \left( \frac{\log \delta}{\log \lambda_{\max}} +1 \right)^N.
\end{equation}
Let $\mathbf{p}^*_\delta\in\mathcal{T}_\delta$ denote the type for which $H(\mathbf{p}^*_\delta)/\chi(\mathbf{p}^*_\delta) = \max_{\mathbf{p}\in\mathcal{T}_\delta}H(\mathbf{p})/\chi(\mathbf{p})$. Then combining~\eqref{eq:04} and~\eqref{eq:05} with~\eqref{eq:00}, we obtain that
\begin{equation*}
\delta^{-H(\mathbf{p}^*_\delta)/\chi(\mathbf{p}^*_\delta)} \cdot O\big((-\log \delta)^{-N}\big) \lessapprox N_{\delta}(\Lambda) \lessapprox \delta^{-H(\mathbf{p}^*_\delta)/\chi(\mathbf{p}^*_\delta)} \cdot O\big((-\log \delta)^{N+1}\big).
\end{equation*} 
Since $\mathcal{T}_\delta$ becomes dense in $\mathcal{P}_N$ as $\delta\to 0$, moreover, $H(\mathbf{p})/\chi(\mathbf{p})$ is continuous in $\mathbf{p}$, we conclude that $\mathbf{p}^*_\delta\to \mathbf{p}^*$ as $\delta\to 0$, where $\mathbf{p}^*\in\mathcal{P}_N$ maximises $H(\mathbf{p})/\chi(\mathbf{p})$ over all $\mathbf{p}\in\mathcal{P}_N$. Hence, $\dim_{\mathrm B}\Lambda = H(\mathbf{p}^*)/\chi(\mathbf{p}^*)$. To finish, a standard use of the Lagrange multipliers shows that $\mathbf{p}^*=(\lambda_1^s,\ldots,\lambda_N^s)$. As a result, $\dim_{\mathrm B}\Lambda=s$ as claimed. Thus, $\mathbf{p}^*$ is the dominant box counting type and $T_\delta(\mathbf{p}^*)$ is the dominant box counting class in this case. 

\begin{remark}\label{remark1}
If $\Lambda$ is a \emph{homogeneous} self-similar set, i.e. $\lambda_i\equiv \lambda$, then $\chi(\mathbf{p}) = -\log \lambda$ for any $\mathbf{p}$. Hence, the dominant box counting type is the uniform measure $\mathbf{p}^*=(1/N,\ldots,1/N)$ since it maximises $H(\mathbf{p})$ with value $\log N$, which implies that $\dim_{\mathrm B}\Lambda=s=\log N/(-\log \lambda)$.
\end{remark}

\section{Main results about self-affine sponges}\label{sec:2}

The main application of the method of types in this paper is to determine the box dimension of self-affine sponges in $\R^d$ of Gatzouras--Lalley and of Bara\'nski type. A \emph{self-affine set} is the attractor of an IFS in which all maps have the form $S_i(\underline{x}) = A_i \underline{x}+\underline{t}_i$, where $A_i$ is a contracting non-singular $d\times d$ matrix and $\underline{t}_i\in\R^d$.

Loosely speaking, sponges are referred to as higher dimensional analogues of self-affine carpet-like constructions on the plane. The key features of these constructions is their excessive alignment of cylinders and defining diagonal matrices. The significance of these carpets is that they provide explicit examples for which the various notions of dimension are different. They are part of a very small family of exceptions, since the box and Hausdorff dimensions of self-affine sets coincide in a `typical' sense~\cite{falconer_1988,falconer_1992} in $\R^d$ and also in a more explicit sense~\cite{BaranyHochmanRapaport,hochmanRapaport2019arxiv} in $\R^2$.

Self-affine carpets were first studied independently by Bedford~\cite{Bedford84_phd} and  McMullen~\cite{mcmullen84}. Their construction was generalised by Gatzouras and Lalley~\cite{GatzourasLalley92} and later by Bara\'nski~\cite{BARANSKIcarpet_2007}. The various dimensions of these basic models are well understood. Most of these results have been generalised in different directions on the plane to constructions with overlaps~\cite{fraser_shmerkin_2016,KolSimon_TriagGL2019,pardo-simon}, to constructions using lower triangular matrices~\cite{BaranskiTriag_2008,KolSimon_TriagGL2019} or to more general `box-like' constructions~\cite{FengWang2005,Fraser12Boxlike}. Figure~\ref{fig:CarpetPicture} shows different carpet-like constructions with increasing complexity. In each case, the shaded rectangles or parallelograms are the images of $[0,1]^2$ under the maps of the defining IFS. The attractor is obtained by repeatedly applying the maps to the remaining shaded areas ad infinitum.  

\begin{figure}[h]
	\centering
	\includegraphics[width=0.95\textwidth]{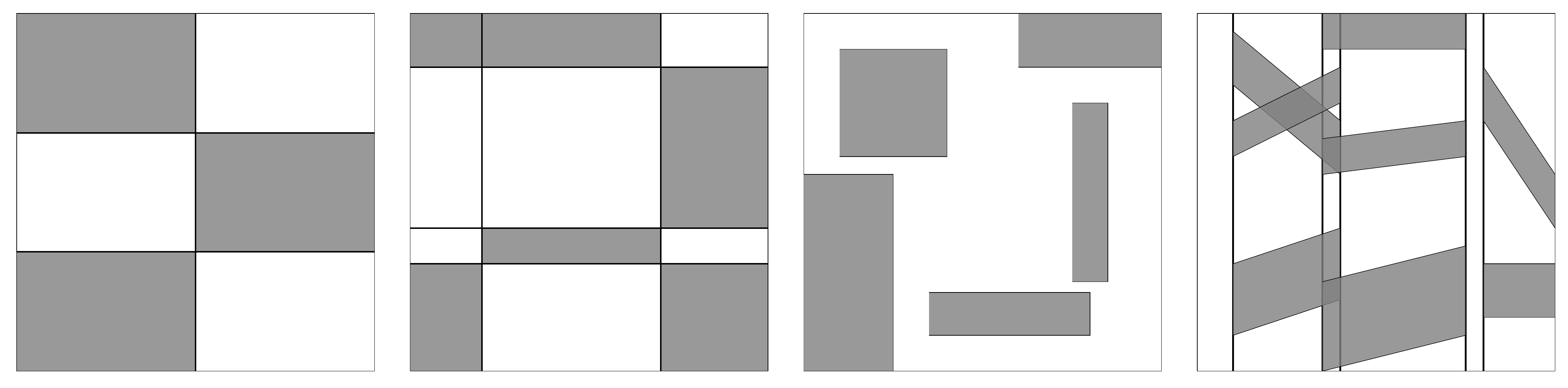}
	\caption{Different carpet-like constructions. From left to right: Bedford--McMullen carpet, Bara\'nski carpet, box-like construction, triangular Gatzaouras--Lalley carpet with overlaps.}
	\label{fig:CarpetPicture}
\end{figure}

Much less is known, however, about the dimension theory of self-affine sponges. In the simplest case of a Bedford--McMullen (also referred to as Sierpi\'nski) sponge, its Hausdorff and box dimensions were obtained by Kenyon and Peres~\cite{KenyonPeres_ETDS96}, while its lower and Assouad dimensions by Fraser and Howroyd~\cite{FraserHowroyd_AnnAcadSciFennMath17}. Feng and Hu~\cite{FengHu09} relaxed the separation condition in case of the Hausdorff and box dimension. Perhaps the paper with the most impact is due to Das and Simmons~\cite{das2017hausdorff}, who by calculating the Hausdorff dimension of Gatzouras--Lalley and Bara\'nski sponges gave the first example of a set which does not have a shift invariant measure of maximal Hausdorff dimension, thus resolving a long standing open problem in dynamical systems. The closest related result is a recent work of Fraser and Jurga~\cite{fraserJurga2019SpongeboxArXiv}, where they obtain results about the box dimension of certain sponges in $\R^3$ generated by generalised permutation matrices, which contain the Gatzouras--Lalley sponges but not the Bara\'nski type. We continue with the formal definitions and state our main results. 

\subsection{Gatzouras--Lalley sponges}\label{sec:21}

The definition is slightly technical and needs some notation. We begin by defining the collection of index sets $\mathcal{I}_1,\mathcal{I}_2,\ldots,\mathcal{I}_d$ as follows:
\begin{enumerate}
\item Fix an integer $N\geq 2$ and let $\mathcal{I}_1:=\{1,\ldots,N\}$;
\item For each $i_1\in\mathcal{I}_1$ fix $N(i_1)\in\N$ and let $\mathcal{I}(i_1):=\{1,\ldots,N(i_1)\}$, moreover, 
\begin{equation*}
\mathcal{I}_2:= \bigcup_{i_1 \in \mathcal{I}_{1}}\; \bigcup_{i_2 \in \mathcal{I}(i_1)}(i_1, i_2);
\end{equation*} 
\item Continue inductively for $3\leq n\leq d$: given $\mathcal{I}_{n-1}$, fix $N(\iu)\in\N$ for each $\iu\in\mathcal{I}_{n-1}$. Let $\mathcal{I}(\iu):=\{1,\ldots,N(\iu)\}$ and finally
\begin{equation*}
\mathcal{I}_{n}:=\bigcup_{\iu \in \mathcal{I}_{n-1}}\; \bigcup_{i_{n} \in \mathcal{I}(\iu)}(\iu, i_{n}).
\end{equation*}
\end{enumerate}
We extensively use projections. To denote the projection of $\iu=(i_1,\ldots,i_n)\in\mathcal{I}_n$ (where $1\leq n\leq d$) to its first $\ell\leq n$ coordinates, we use the notation
\begin{equation*}
\iu^{(\ell)} := (i_1,\ldots,i_\ell).
\end{equation*}
The same notation is extended to vectors $\underline{v}$ and subsets $V$ of $\R^n$: $\underline{v}^{(\ell)} = (v_1,\ldots,v_\ell)$ and $V^{(\ell)} = \bigcup_{\underline{v}\in V}\underline{v}^{(\ell)}$. 

We can now introduce the IFSs $\mathcal{S}_1,\ldots,\mathcal{S}_d$, where $\mathcal{S}_n=\{S_{\iu}: \R^n\to\R^n\}_{\iu\in\mathcal{I}_n}$ is defined as
\begin{equation*}
S_{\iu}(\underline{x}) = A_{\iu} \underline{x} + \underline{t}_{\iu} = 
\begin{bmatrix}
\lambda(\iu^{(1)}) & & 0\\
 &\ddots & \\
0 & & \lambda(\iu^{(n)}) \\
\end{bmatrix} \cdot \underline{x} + \begin{bmatrix}
t(\iu^{(1)}) \\
\vdots \\
t(\iu^{(n)}) \\
\end{bmatrix}.
\end{equation*}
The $\ell$-th coordinate of $S_{\iu}(\underline{x})$ only depends on the first $\ell$ coordinates of $\iu$. We assume that $0<\lambda(\iu)<1$ for every $1\leq n\leq d$ and $\iu\in\mathcal{I}_n$. Without loss of generality we assume that the $t_{\iu}$ are chosen so that $S_{\iu}([0,1]^n)\subset [0,1]^n$. The attractor of $\mathcal{S}_n$ is the unique, non-empty compact set $\Lambda_{n}$ satisfying the relation
\begin{equation*}
\Lambda_{n}=\bigcup_{\iu \in \mathcal{I}_{n}} S_{\iu}(\Lambda_{n}).
\end{equation*}

\begin{definition}
The attractor $\Lambda_{d}$ of $\mathcal{S}_d$ is a \emph{Gatzouras--Lalley (GL) sponge} in $\R^d$ if 
\begin{equation}\label{eq:20}
S_{\iu}((0,1)^d)\cap S_{\underline{j}}((0,1)^d)=\emptyset \;\text{ for every } \iu\neq\underline{j}\in\mathcal{I}_d
\end{equation}
and 
\begin{equation}\label{eq:21}
0<\lambda(\iu^{(d)})<\lambda(\iu^{(d-1)})<\ldots<\lambda(\iu^{(1)})<1 \;\text{ for every } \iu\in\mathcal{I}_d.
\end{equation}
We call condition~\eqref{eq:20} the \emph{cuboidal open set condition (COSC)} and condition~\eqref{eq:21} is the \emph{coordinate ordering condition}.
\end{definition}

\begin{remark}\
\begin{enumerate}
\item Observe that $\Lambda_{n}^{(\ell)} = \Lambda_{\ell}$ for any $\ell\leq n\leq d$. In addition, if $\Lambda_{d}$ is a GL-sponge in $\R^d$, then $\Lambda_{d}^{(n)}$ is a GL-sponge in $\R^n$ for every $1\leq n\leq d$.
\item The COSC, introduced in~\cite{fraserJurga2019SpongeboxArXiv}, is the higher dimensional analogue of the rectangular OSC defined in~\cite{FengWang2005}. The name for~\eqref{eq:21} is taken from~\cite{das2017hausdorff}.
\item Condition~\eqref{eq:21} could be assumed for any permutation of the coordinates, as long as the permutation is the same for all $\iu\in\mathcal{I}_d$. We chose this to simplify notation.
\end{enumerate}
\end{remark}

\begin{theorem}\label{thm:main}
Let $\Lambda_{d}$ be a Gatzouras--Lalley sponge in $\R^d$. Then
\begin{equation*}
\dim_{\mathrm B} \Lambda_{d} = s_d,
\end{equation*}
where the numbers $s_1\leq s_2\leq \ldots \leq s_d$ are defined as the unique solutions to the equations
\begin{equation}\label{eq:22}
\sum_{i\in\mathcal{I}_1} (\lambda(i))^{s_1}=1 \;\text{ and }\; \sum_{\iu\in\mathcal{I}_{n}} \big(\lambda(\iu^{(1)})\big)^{s_1}\cdot \prod_{\ell=2}^n \big(\lambda(\iu^{(\ell)})\big)^{s_{\ell}-s_{\ell-1}} =1 \text{ for } n=2,\ldots,d.
\end{equation}
\end{theorem}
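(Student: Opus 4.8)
The plan is to mimic the self-similar argument from Section~\ref{sec:11}, but now the symbolic $\delta$-cover of $\Lambda_d$ uses \emph{approximate squares} (or rather approximate cuboids) rather than cylinders, because the matrices $A_{\iu}$ contract at different rates in different coordinates. Concretely, given $\ii=(i_1i_2\ldots)\in\Sigma:=\mathcal{I}_d^{\N}$, I would define a vector of stopping times $L_{\delta}^{(1)}(\ii)\geq L_{\delta}^{(2)}(\ii)\geq\ldots\geq L_{\delta}^{(d)}(\ii)$, where $L_{\delta}^{(\ell)}(\ii)$ is the $\delta$-stopping in the $\ell$-th coordinate direction, i.e. the unique integer with $\prod_{m=1}^{L_{\delta}^{(\ell)}} \lambda((i_1\ldots i_m)^{(\ell)}) \leq \delta < \prod_{m=1}^{L_{\delta}^{(\ell)}-1}\lambda((i_1\ldots i_m)^{(\ell)})$; the coordinate ordering condition~\eqref{eq:21} forces $L_{\delta}^{(1)}\geq\cdots\geq L_{\delta}^{(d)}$. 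The approximate cuboid containing $\ii$ is then the set of $\jj$ agreeing with $\ii$ up to level $L_{\delta}^{(1)}$ in the first coordinate, up to level $L_{\delta}^{(2)}$ in the second, and so on; its $\pi$-image has diameter $\approx\delta$, and the COSC~\eqref{eq:20} guarantees (as in the carpet case, via a bounded-overlap argument on the cuboids) that $N_{\delta}(\Lambda_d)\approx$ the number of distinct approximate cuboids at scale $\delta$.

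Next I would set up the right notion of type. For a cuboid determined by $\ii$ at scale $\delta$, the relevant combinatorial data is, for each $1\leq\ell\leq d$, the empirical frequency vector of the $\ell$-th coordinate symbols $(i_1\ldots i_m)^{(\ell)}$ over the ``window'' $L_{\delta}^{(\ell+1)}< m \leq L_{\delta}^{(\ell)}$ (with $L_{\delta}^{(d+1)}:=0$), conditioned on the prefix of coordinates already fixed. This is a multi-dimensional type for words of varying lengths, exactly the generalisation of~\eqref{eq:01}--\eqref{eq:02} flagged in the ``Main contribution'' paragraph. Counting the cuboids of a fixed composite type $\mathbf{P}$ factorises as a product over $\ell=1,\ldots,d$ of type-class cardinalities, so by~\eqref{eq:02} the log-cardinality is $\approx \sum_{\ell=1}^{d}(L_{\delta}^{(\ell)}-L_{\delta}^{(\ell+1)})\, H_\ell(\mathbf{P})$ for the appropriate conditional entropies $H_\ell$, while by~\eqref{eq:01} the number of composite types is at most polynomial in $-\log\delta$, hence $o(\delta^{-1})$. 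Plugging into~\eqref{eq:00} reduces the problem to a deterministic optimisation: maximise $\sum_{\ell}(\text{window length}_\ell)\,H_\ell$ subject to the window lengths being pinned down by the Lyapunov exponents $\chi_\ell(\mathbf{P}):=-\sum p\log\lambda(\cdot^{(\ell)})$ via $L_{\delta}^{(\ell)}\approx -\log\delta/\chi_\ell$, exactly as in~\eqref{eq:03}. After dividing by $-\log\delta$ this becomes a finite-dimensional concave maximisation over the (limiting, dense) space of probability vectors on $\mathcal{I}_d$, whose value is $\dim_{\mathrm B}\Lambda_d$.

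The final step is to identify the maximiser and show the optimal value equals $s_d$. I would use Lagrange multipliers coordinate-by-coordinate: at the optimum the conditional distribution on the $\ell$-th coordinate (given the first $\ell-1$) should be a Gibbs-type weighting $\propto \lambda(\cdot^{(\ell)})^{s_\ell-s_{\ell-1}}$, and the marginal on the first coordinate $\propto \lambda(\cdot^{(1)})^{s_1}$, with the exponents $s_1,\ldots,s_d$ forced to satisfy precisely the normalisation equations~\eqref{eq:22}; a telescoping computation then shows the objective evaluates to $s_d$. The monotonicity $s_1\leq\cdots\leq s_d$ should follow because $\Lambda_d^{(n)}=\Lambda_n$ is itself a GL-sponge with box dimension $s_n$, and projecting a cover of $\Lambda_d$ onto the first $n$ coordinates cannot increase the covering number, giving $s_n\leq s_d$, and iterating; alternatively it drops out of the structure of the equations.

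I expect the main obstacle to be the bookkeeping in the second paragraph: making the multi-dimensional ``type for words of varying length'' precise so that the counting genuinely factorises, and verifying that all the $\lessapprox/\gtrapprox$ error terms are powers of $-\log\delta$ (not of $\delta$) and that $\#\mathcal{T}_\delta=o(\delta^{-1})$, so that taking $\log$, dividing by $-\log\delta$ and letting $\delta\to 0$ kills them. The rest — the bounded-overlap consequence of the COSC, the continuity/density argument for passing to the limiting optimisation, and the Lagrange computation giving~\eqref{eq:22} — is routine once the framework is in place.
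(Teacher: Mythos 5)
Your proposal follows essentially the same route as the paper: symbolic approximate cubes under the COSC, a multi-dimensional type recording the empirical distribution of each coordinate block between consecutive stopping times, the method-of-types bounds~\eqref{eq:01}--\eqref{eq:02} yielding a variational formula of the form $\max_{\mathbf{P}}\sum_n C_n^{(d)}(\mathbf{P})\cdot H(\mathbf{p}_n)$ (your ``window lengths pinned down by Lyapunov exponents'' is exactly the content of Lemma~\ref{lem:2}), and a coordinate-by-coordinate Lagrange-multiplier and telescoping computation identifying the Gibbs-type maximiser and the value $s_d$. The differences are cosmetic --- you phrase the per-block counting via conditional entropies rather than the joint distributions $\mathbf{p}_n$ on $\mathcal{I}_n$, which agree by the chain rule --- so this is the paper's argument in outline.
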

The equations in~\eqref{eq:22} naturally define probability vectors $\mathbf{p}_1^*,\ldots,\mathbf{p}_d^*$. These vectors define a multi-dimensional type class, see Section~\ref{sec:32}, and the proof reveals that this type class is the dominant box counting class.

The theorem in two dimensions was first proved by Gatzouras and Lalley~\cite{GatzourasLalley92}, and  for $d=3$ it follows from a more general result of Fraser and Jurga~\cite{fraserJurga2019SpongeboxArXiv}. Their arguments are completely different and they are completely different from the proof presented here.

Besides the closed form for $s_d$ given by~\eqref{eq:22}, we also obtain a variational formula, see Proposition~\ref{prop:1} and Lemma~\ref{lem:1} for details. In particular, in two dimensions,
\begin{equation}\label{eq:23}
\dim_{\mathrm B}\Lambda_2 = \max_{\mathbf{p}_1\in\mathcal{P}_{\mathcal{I}_1},\, \mathbf{p}_2\in\mathcal{P}_{\mathcal{I}_2}}\; \frac{H(\mathbf{p}_2)}{\chi_2(\mathbf{p}_2)}  +
\left( 1- \frac{\chi_1(\mathbf{p}_2)}{\chi_2(\mathbf{p}_2)} \right) \frac{H(\mathbf{p}_1)}{\chi_1(\mathbf{p}_1)},
\end{equation}
where $\mathcal{P}_{\mathcal{I}_1}$ and $\mathcal{P}_{\mathcal{I}_2}$ denote the set of probability vectors on $\mathcal{I}_1$ and $\mathcal{I}_2$, respectively, and the Lyapunov exponents are $\chi_n(\mathbf{p}_m)= -\sum_{\iu\in\mathcal{I}_{m}} p_m(\iu) \log \lambda(\iu^{(n)})$ for $1\leq n\leq m\leq 2$. The formula resembles the Ledrappier--Young formula for Hausdorff dimension, see Section~\ref{sec:5} for a detailed discussion. For the three-dimensional analogue of this formula see~\eqref{eq:40}.

\subsection{Bara\'nski sponges}\label{sec:22}

The notation is slightly simpler in this case. For $1\leq n\leq d$, the index set $\mathcal{I}_n:=\{1,\ldots,N_n\}$ defines the base IFS $\mathcal{F}_n$ in coordinate $n$ by
\begin{equation*}
\mathcal{F}_n := \big\{ f_{n,i}(x) = \lambda_n(i)\cdot x + t_{n,i}\big\}_{i\in\mathcal{I}_n}, \;\text{ where } t_{n,i} = \sum_{\ell=1}^{i-1} \lambda_n(\ell).
\end{equation*}
The choice of $t_{n,i}$ implies that each $\mathcal{F}_n$ satisfies the OSC~\eqref{eq:06} with $V=(0,1)$. The alphabet is a subset $\mathcal{I}\subseteq \prod_{n=1}^d \mathcal{I}_n$ and an element of it is $\iu=(i_1,\ldots,i_d)$. For a subset $D\subset\{1,\ldots,d\}$ let $\Pi(\iu;D):=(i_{\ell})_{\ell\in D}$, i.e. the coordinates of $\iu$ whose indices belong to $D$. Similarly, $\Pi(\mathcal{I};D):=\{\Pi(\iu;D):\, \iu\in\mathcal{I}\}$.   
\begin{definition}\label{def:Baranski}
The IFS $\mathcal{S}=\{S_{\iu}\}_{\iu\in\mathcal{I}}$ is of \emph{Bara\'nski type} if
\begin{equation*}
S_{\iu}(\underline{x}) = \big( f_{1,i_1}(x_1),\ldots,f_{d,i_d}(x_d) \big).
\end{equation*}
The attractor $\Lambda = \bigcup_{\iu \in \mathcal{I}}S_{\iu}(\Lambda)$ is a \emph{Bara\'nski sponge} in $\R^d$.
\end{definition}

To state the result in this case, we let $\mathrm{Sym}(\{1,\ldots, d\})$ denote the symmetric group on the set of coordinates $\{1,\ldots,d\}$ and denote a permutation by
\begin{equation}\label{eq:24}
\sigma = \begin{pmatrix}
1 & 2 & \cdots & d \\
\sigma_1 & \sigma_2 & \cdots & \sigma_d
\end{pmatrix}
\in \mathrm{Sym}(\{1,\ldots, d\}).
\end{equation}

\begin{theorem}\label{thm:main2}
Let $\Lambda_{d}$ be a Bara\'nski sponge in $\R^d$. Then
\begin{equation*}
\dim_{\mathrm B} \Lambda_{d} = \max_{\sigma\in\mathrm{Sym}(\{1,\ldots, d\})} s_d(\sigma),
\end{equation*}
where for a fixed $\sigma=(\sigma_1,\ldots,\sigma_d)\in\mathrm{Sym}(\{1,\ldots d\})$ the numbers $s_1(\sigma)\leq s_2(\sigma)\leq \ldots \leq s_d(\sigma)$ are defined as the unique solutions to the equations
\begin{align*}
\sum_{i\in\Pi(\mathcal{I};\{\sigma_1\})} (\lambda_{\sigma_1}(i))^{s_1(\sigma)}&=1 \;\text{ and }\; \\ \sum_{(i_1\ldots,i_n)\in\Pi(\mathcal{I};\{\sigma_1,\ldots,\sigma_n\})} \big(\lambda_{\sigma_1}(i_1)\big)^{s_1(\sigma_1)}\cdot \prod_{\ell=2}^n \big(\lambda_{\sigma_{\ell}}(i_{\ell})\big)^{s_{\ell}(\sigma)-s_{\ell-1}(\sigma)} &=1 \;\text{ for } n=2,\ldots,d.
\end{align*}
\end{theorem}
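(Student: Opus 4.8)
The plan is to reduce Theorem~\ref{thm:main2} to the Gatzouras--Lalley case treated in Theorem~\ref{thm:main} via a projection-and-slicing argument, after first establishing that the box dimension is unaffected by permuting the roles of the coordinates. Concretely, fix a permutation $\sigma\in\mathrm{Sym}(\{1,\ldots,d\})$ and re-coordinatise $\R^d$ so that coordinate $\sigma_n$ plays the role of the $n$-th coordinate; the Bara\'nski IFS is then a product system $S_{\iu}(\underline{x}) = (f_{\sigma_1,i_{\sigma_1}}(x_{\sigma_1}),\ldots,f_{\sigma_d,i_{\sigma_d}}(x_{\sigma_d}))$ with contraction ratios $\lambda_{\sigma_n}(i_{\sigma_n})$ in the $n$-th slot. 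The point is that, unlike in the GL case, there is no intrinsic ordering of the contraction ratios, so one is free to choose the order in which the coordinate axes are "stopped'' in the Moran construction; each choice of $\sigma$ yields one admissible way of running the box-counting argument, and the true box dimension is the best (largest) value obtained, hence the maximum over $\mathrm{Sym}(\{1,\ldots,d\})$.

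First I would set up, for a fixed $\sigma$, the symbolic machinery exactly as in Section~\ref{sec:11} and (anticipating) Section~\ref{sec:3}, but with a $\sigma$-dependent notion of stopping. The optimal $\delta$-cover of a Bara\'nski sponge is by approximate cuboids obtained by iterating the IFS until, coordinate by coordinate in the order $\sigma_1,\sigma_2,\ldots,\sigma_d$, the corresponding side-length first drops below $\delta$; because the ratios in different coordinates are unrelated, the word lengths needed in the $d$ coordinates differ, and one gets a nested sequence of stopping times $L^{(1)}_\delta(\ii)\le L^{(2)}_\delta(\ii)\le\cdots\le L^{(d)}_\delta(\ii)$, where $L^{(n)}_\delta$ is the level at which the $\sigma_n$-side becomes $\approx\delta$. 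Condition~\eqref{eq:21} in the GL setting is exactly what makes this nesting automatic there; in the Bara\'nski setting we impose the nesting by fiat through the choice of $\sigma$, and then the multi-dimensional method of types developed for Theorem~\ref{thm:main} applies essentially verbatim: one partitions the covering cuboids into type classes recording, for each $n$, the empirical distribution on $\Pi(\mathcal{I};\{\sigma_1,\ldots,\sigma_n\})$ of the first $L^{(n)}_\delta$ symbols, counts each class using~\eqref{eq:02}, bounds the number of types using~\eqref{eq:01} together with the linear-in-$\log\delta$ bound on all the $L^{(n)}_\delta$, and extracts the dominant type as $\delta\to0$.

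Carrying this out, the box dimension \emph{for the fixed ordering} $\sigma$ comes out as the maximum over probability vectors $\mathbf{p}_1,\ldots,\mathbf{p}_d$ (on the successive projected alphabets $\Pi(\mathcal{I};\{\sigma_1,\ldots,\sigma_n\})$, with consistency under further projection) of a Ledrappier--Young-type sum
\[
\frac{H(\mathbf{p}_d)}{\chi_d(\mathbf{p}_d)} + \sum_{n=1}^{d-1}\Big(1 - \frac{\chi_n(\mathbf{p}_{n+1})}{\chi_{n+1}(\mathbf{p}_{n+1})}\Big)\frac{H(\mathbf{p}_n)}{\chi_n(\mathbf{p}_n)},
\]
the exact analogue of~\eqref{eq:23} and~\eqref{eq:40}, where the Lyapunov exponents are now taken with respect to the $\sigma_n$-th coordinate ratios. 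A Lagrange-multiplier computation, identical in structure to the one sketched for self-similar sets and to the one that will appear in the GL proof, shows this maximum equals $s_d(\sigma)$, the solution of the stated system of equations. Finally, I would argue that the overall $N_\delta(\Lambda_d)$ is comparable (up to subexponential factors) to the maximum over $\sigma$ of the $\sigma$-adapted counts: any covering cuboid has \emph{some} coordinate whose side first reaches scale $\delta$, then some next, and so on, so the collection of all covering cuboids is partitioned according to which permutation describes their stopping order, and since there are only $d!$ permutations this only costs a constant factor in~\eqref{eq:00}; taking logarithms and $\delta\to0$ turns the sum over $\sigma$ into a maximum.

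The main obstacle I expect is \emph{not} the method-of-types bookkeeping for a single $\sigma$ — that is a direct adaptation of Theorem~\ref{thm:main}'s proof, replacing the forced order~\eqref{eq:21} by the chosen order $\sigma$ — but rather making rigorous the claim that the true optimal $\delta$-cover is (up to bounded multiplicity) refined by the union over $\sigma$ of the $\sigma$-adapted Moran covers, i.e. that one genuinely does not lose by committing to a single coordinate-stopping order within each piece. This requires a careful geometric argument that an approximate cuboid at scale $\delta$ whose side-lengths in the $d$ coordinates are some tuple $(\delta_1,\ldots,\delta_d)$ with all $\delta_n\le\delta$ can be efficiently re-covered by cuboids coming from one of the nested families, with the redundancy controlled by a constant depending only on $d$ and the ratios; equivalently, one must check that relaxing the nesting $L^{(1)}_\delta\le\cdots\le L^{(d)}_\delta$ to arbitrary stopping tuples does not increase the exponential growth rate of the cover. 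A secondary technical point is verifying that the consistency constraints among $\mathbf{p}_1,\ldots,\mathbf{p}_d$ (each a marginal of the next) are compatible with the types actually realised by $\mathcal{B}_\delta$ becoming dense in the constraint set as $\delta\to0$, so that the discrete maximum converges to the continuous one; this is routine but must be stated, since $\mathcal{I}$ is only a \emph{subset} of $\prod_n\mathcal{I}_n$ and the projected alphabets $\Pi(\mathcal{I};\{\sigma_1,\ldots,\sigma_n\})$ can be strictly smaller than full products.
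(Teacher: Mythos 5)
Your overall strategy coincides with the paper's: form the canonical partition of $\Sigma$ into $\delta$-approximate cubes by stopping each coordinate at scale $\delta$, sort these cubes into classes $\mathcal{B}_\delta(\sigma)$ according to the permutation $\sigma$ that orders their stopping times, run the Gatzouras--Lalley argument of Theorem~\ref{thm:main} inside each class, and absorb the factor $d!$ when passing to logarithms. Your third paragraph is exactly the paper's reduction. However, the constraint you impose in parentheses --- that $\mathbf{p}_1,\ldots,\mathbf{p}_d$ be \emph{consistent under further projection}, i.e.\ each a marginal of the next --- is a genuine error, not a ``secondary technical point to verify''. It stems from defining $\mathbf{p}_n$ as the empirical distribution of the \emph{first} $L^{(n)}_\delta$ symbols. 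The correct multidimensional type, as in~\eqref{eq:30}, records the empirical distribution of the $n$-th \emph{disjoint block} of symbols (positions $L_\delta(\ii,\sigma_{n+1})+1$ through $L_\delta(\ii,\sigma_n)$, projected to the coordinates $\{\sigma_1,\ldots,\sigma_n\}$); these block types are completely unconstrained, which is what makes the type-class count factor as a product of the bounds~\eqref{eq:02} and yields the free maximisation in~\eqref{eq:33} and~\eqref{eq:23}. If you genuinely impose marginal consistency, the Lagrange computation produces the Hausdorff-dimension variational formula~\eqref{eq:50} (where $\mathbf{q}_{\mathbf{p}}$ \emph{is} the marginal), which is in general strictly smaller than $s_d(\sigma)$; equality of the two is precisely the uniform fibre condition. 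So your claimed conclusion that the maximum equals $s_d(\sigma)$ is incompatible with the constraint you state.

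Your self-identified ``main obstacle'' --- that committing to a single stopping order might lose efficiency against the true optimal cover --- dissolves once the argument is set up as above and no re-covering lemma is needed: there is a single collection $\mathcal{B}_\delta$ of approximate cubes, it is a partition of $\Sigma$ whose projection achieves $\#\mathcal{B}_\delta=N_\delta(\Lambda)$ by the coordinatewise open set condition, and the ordering is \emph{observed} cube by cube rather than imposed by fiat, so the $\mathcal{B}_\delta(\sigma)$ genuinely exhaust the cover. The only real subtlety here is that a cube with tied stopping times lies in several classes $\mathcal{B}_\delta(\sigma)$; the paper disposes of this by noting that a tie forces an empty block, hence $C_n^{(d)}(\mathbf{P})=0$ for the corresponding index in Lemma~\ref{lem:2}, so by Proposition~\ref{prop:1} such a type is never dominant and those cubes may be discarded. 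You should replace the proposed geometric re-covering argument with this observation, and drop the consistency constraint in favour of the block-type formulation.
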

Essentially the theorem states that for every possible ordering of the coordinates, one has to calculate the numbers $s_1(\sigma),\ldots,s_d(\sigma)$ like in the GL case and then take a maximum. The reason why all orderings are considered is because the coordinate ordering condition~\eqref{eq:21} is not assumed for Bara\'nski sponges. The theorem in two dimensions was first proved in~\cite{BARANSKIcarpet_2007}, but the proof is different from the one presented here.
\begin{remark}
The packing dimension of every Gatzouras--Lalley or Bara\'nski sponge is equal to its box dimension, since $\Lambda$ is compact and every open set intersecting $\Lambda$ contains a bi-Lipschitz image of $\Lambda$, see~\cite[Corollary 3.9]{FalconerBook}. 
\end{remark}

\section{Preliminaries}\label{sec:3}

This section introduces approximate cubes and multi-dimensional types. Here we concentrate on Gatzouras--Lalley  sponges. The slight modifications for Bara\'nski sponges are discussed in Section~\ref{sec:41}.
 
\subsection{Approximate cubes}\label{sec:31}

The natural generalization of approximate squares used extensively in covering arguments for self-affine carpets on the plane are approximate cubes in higher dimensions. The \emph{$\delta$-stopping of $\ii\in\Sigma$ in the $n$-th coordinate} (for $n=1,\ldots,d$) is the unique integer $L_{\delta}(\ii,n)$ such that
\begin{equation}\label{eq:34}
\prod_{\ell=1}^{L_{\delta}(\ii,n)} \lambda\big(\iu_\ell^{(n)}\big) \leq \delta < \prod_{\ell=1}^{L_{\delta}(\ii,n)-1}\lambda\big(\iu_\ell^{(n)}\big).
\end{equation}
Also let $L_{\delta}(\ii,d+1):=0$. The \emph{symbolic $\delta$-approximate cube} containing $\ii\in\Sigma$ is
\begin{equation*}
B_\delta(\ii) = \big\{ \jj\in\Sigma:\, \ii^{(n)}|L_{\delta}(\ii,n) = \jj^{(n)}|L_{\delta}(\ii,n)\; \text{ for every } n=1,\ldots,d\big\}.
\end{equation*}
It is easy to see that for $\ii\neq\jj\in\Sigma$, either $B_\delta(\ii)=B_\delta(\jj)$ or $B_\delta(\ii)\cap B_\delta(\jj)=\emptyset$. Hence, the set of approximate cubes $\mathcal{B}_\delta$ defines a partition of $\Sigma$. To make a distinction, for each element $B_\delta(\ii)$ of the partition $\mathcal{B}_\delta$ we choose an $\iih\in B_\delta(\ii)$ to `represent' it and write $B_\delta(\iih)\in\mathcal{B}_\delta$. Since we assume the COSC~\eqref{eq:20}, the image of two elements $B_\delta(\iih)\neq B_\delta(\jjh)\in\mathcal{B}_\delta$ by the natural projection $\pi$ on $\Lambda$ can only intersect on their boundary, so we obtain a cover of $\Lambda$ for which $\#\mathcal{B}_\delta=N_{\delta}(\Lambda)$. As a result, it is enough to work with the set of symbolic approximate cubes $\mathcal{B}_\delta$.

\subsection{Multidimensional types}\label{sec:32}

In order to introduce multidimensional types, first observe that every approximate cube $B_\delta(\ii)$ can be uniquely identified with the finite sequence 
\begin{equation}\label{eq:30}
\big\{ \underbrace{\iu_1^{(d)},\ldots,\iu_{L_{\delta}(\ii,d)}^{(d)}}_{\in(\mathcal{I}_d)^{L_{\delta}(\ii,d)}}\,;\,
\underbrace{\iu_{L_{\delta}(\ii,d)+1}^{(d-1)},\ldots,\iu_{L_{\delta}(\ii,d-1)}^{(d-1)}}_{\in(\mathcal{I}_{d-1})^{L_{\delta}(\ii,d-1)-L_{\delta}(\ii,d)}}
\,;\,\ldots\,;\, 
\underbrace{\iu_{L_{\delta}(\ii,2)+1}^{(1)},\ldots,\iu_{L_{\delta}(\ii,1)}^{(1)}}_{\in(\mathcal{I}_1)^{L_{\delta}(\ii,1)-L_{\delta}(\ii,2)}}\big\}.
\end{equation}
This identification is indeed one-to-one because the coordinate ordering condition~\eqref{eq:21} implies that $L_{\delta}(\ii,d)<L_{\delta}(\ii,d-1)<\ldots<L_{\delta}(\ii,1)$ for every $\ii\in\Sigma$.

In this setting, the \emph{type of $\ii\in\Sigma$ at scale $\delta$} is the $\#\mathcal{I}_d+\#\mathcal{I}_{d-1}+\ldots+\#\mathcal{I}_1$ dimensional empirical vector
\begin{equation*}
\tau_{\delta}(\ii) = \big( \tau_{\delta}(\ii,d)\,;\, \tau_{\delta}(\ii,d-1)\,;\,\ldots \,;\, \tau_{\delta}(\ii,1) \big),
\end{equation*}
where for $1\leq n\leq d$
\begin{equation*}
\tau_{\delta}(\ii,n) = \frac{1}{L_{\delta}(\ii,n)-L_{\delta}(\ii,n+1)} \Big( \#\big\{L_{\delta}(\ii,n+1)+1\leq\ell\leq L_{\delta}(\ii,n):\, \iu_{\ell}^{(n)}=\underline{j}\big\}  \Big)_{\underline{j}\in\mathcal{I}_n} .
\end{equation*}
Note that $\tau_{\delta}(\ii,n)$ is an $\#\mathcal{I}_n$ dimensional probability vector. The set of all possible types at scale $\delta$ is
\begin{equation*}
\mathcal{T}_\delta=\big\{\mathbf{P}=(\mathbf{p}_d;\mathbf{p}_{d-1};\ldots; \mathbf{p}_1): \text{ there exists } B_\delta(\iih)\in\mathcal{B}_\delta \text{ such that } \mathbf{P}=\tau_\delta(\iih)\big\},
\end{equation*}
and the \emph{type class of} $\mathbf{P}\in \mathcal{T}_\delta$ is the set
\begin{equation*}
T_\delta(\mathbf{P}) = \big\{B_\delta(\iih)\in\mathcal{B}_\delta:\, \tau_\delta(\iih)=\mathbf{P}\big\}.
\end{equation*}
Let $\mathbf{p}_m$ be a probability vector on $\mathcal{I}_m$. For $1\leq n\leq m$, we denote the \emph{Lyapunov exponent} by
\begin{equation*}
\chi_n(\mathbf{p}_m) := -\sum_{\iu\in\mathcal{I}_{m}} p_m(\iu) \log \lambda(\iu^{(n)}).
\end{equation*}

\begin{lemma}\label{lem:2}
Fix a type $\mathbf{P}=(\mathbf{p}_d;\mathbf{p}_{d-1};\ldots; \mathbf{p}_1)\in\mathcal{T}_\delta$. For every $1\leq n\leq d$ there exists a constant $C_n^{(d)}(\mathbf{P})$ depending on $\mathbf{P}$ only through $\chi_\ell(\mathbf{p}_m)$ for $n\leq \ell\leq m\leq d$ such that
\begin{equation*}
L_{\delta}(\iih,n)-L_{\delta}(\iih,n+1) \approx -C_n^{(d)}(\mathbf{P}) \cdot\log \delta, \;\text{ where } \iih\in\Sigma \text{ is such that } \tau_\delta(\iih)=\mathbf{P}.
\end{equation*}
Moreover, $\sum_{m=n+1}^{d}C_m^{(d)}(\mathbf{P})\cdot \chi_{n+1}(\mathbf{p}_m)=1$ for every $1\leq n\leq d-1$. 
\end{lemma}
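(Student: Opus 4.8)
The plan is to reduce the statement to solving a triangular linear system in the block-lengths of \eqref{eq:30}, the coefficients of which are the Lyapunov exponents $\chi_\ell(\mathbf p_m)$. Fix $\iih$ with $\tau_\delta(\iih)=\mathbf P$, abbreviate $M_n:=L_\delta(\iih,n)$ (so that $M_{d+1}=0$), and set $x_n:=M_n-M_{n+1}$; by the coordinate ordering condition \eqref{eq:21} each $x_n$ is a positive integer and is precisely the length of the $n$-th block in \eqref{eq:30}, so the claim to prove is $x_n\approx -C_n^{(d)}(\mathbf P)\log\delta$.

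First I would put the defining relation \eqref{eq:34} for coordinate $n$ into additive form: taking logarithms gives $\sum_{\ell=1}^{M_n}\log\lambda(\iu_\ell^{(n)})=\log\delta+O(1)$, the error being the single bounded factor $\log\lambda(\iu_{M_n}^{(n)})$. Then I would evaluate the left-hand side through the type. Split $\{1,\dots,M_n\}=\bigsqcup_{m=n}^d\{M_{m+1}+1,\dots,M_m\}$ into its blocks, and on the $m$-th block use $\tau_\delta(\iih,m)=\mathbf p_m$ together with $\iu_\ell^{(n)}=(\iu_\ell^{(m)})^{(n)}$ to obtain, \emph{exactly},
\begin{equation*}
\sum_{\ell=M_{m+1}+1}^{M_m}\log\lambda(\iu_\ell^{(n)})=x_m\sum_{\iu\in\mathcal I_m}p_m(\iu)\log\lambda(\iu^{(n)})=-x_m\,\chi_n(\mathbf p_m).
\end{equation*}
Summing over $m=n,\dots,d$ yields the system
\begin{equation*}
\sum_{m=n}^d x_m\,\chi_n(\mathbf p_m)=-\log\delta+O(1),\qquad n=1,\dots,d,
\end{equation*}
with an absolute $O(1)$ error.

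Next I would solve this system by back-substitution from $n=d$ down to $n=1$. The $n=d$ equation gives $x_d=(-\log\delta)/\chi_d(\mathbf p_d)+O(1)$, which is legitimate since $\chi_d(\mathbf p_d)$ is bounded below by $-\log$ of the largest contraction ratio in coordinate $d$; set $C_d^{(d)}(\mathbf P):=1/\chi_d(\mathbf p_d)$. Assuming inductively that $x_m=-C_m^{(d)}(\mathbf P)\log\delta+O(1)$ for $m>n$, substituting into the $n$-th equation and dividing by $\chi_n(\mathbf p_n)>0$ gives $x_n=-C_n^{(d)}(\mathbf P)\log\delta+O(1)$ with
\begin{equation*}
C_n^{(d)}(\mathbf P):=\frac{1}{\chi_n(\mathbf p_n)}\Big(1-\sum_{m=n+1}^d C_m^{(d)}(\mathbf P)\,\chi_n(\mathbf p_m)\Big),
\end{equation*}
a quantity which by construction depends on $\mathbf P$ only through the $\chi_\ell(\mathbf p_m)$ with $n\le\ell\le m\le d$. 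Rearranging this recursion at index $n+1$ (the base case $n+1=d$ being just $C_d^{(d)}(\mathbf P)\chi_d(\mathbf p_d)=1$) yields immediately $\sum_{m=n+1}^d C_m^{(d)}(\mathbf P)\,\chi_{n+1}(\mathbf p_m)=1$, which is the ``moreover'' assertion.

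Finally I would verify $C_n^{(d)}(\mathbf P)>0$, so that the asymptotics really is of the stated form $\approx -C_n^{(d)}(\mathbf P)\log\delta$ with the $O(1)$ genuinely lower order; this is the only point where the coordinate ordering condition does work beyond guaranteeing \eqref{eq:30}. By downward induction: $C_d^{(d)}(\mathbf P)>0$ is clear, and given $C_m^{(d)}(\mathbf P)>0$ for $m>n$, the identity just proved together with the strict inequalities $\chi_n(\mathbf p_m)<\chi_{n+1}(\mathbf p_m)$ for $m>n$ — which hold because $\lambda(\iu^{(n)})>\lambda(\iu^{(n+1)})$ by \eqref{eq:21} — give $\sum_{m=n+1}^d C_m^{(d)}(\mathbf P)\chi_n(\mathbf p_m)<\sum_{m=n+1}^d C_m^{(d)}(\mathbf P)\chi_{n+1}(\mathbf p_m)=1$, hence $C_n^{(d)}(\mathbf P)>0$. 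The remaining work is purely the bookkeeping of which identities above are exact and which carry an $O(1)$, and checking that the $d$-fold back-substitution leaves the accumulated error $O(1)$ with constants uniform in $\mathbf P$ (using that all the $\chi$'s, and the gaps $\chi_{n+1}(\mathbf p_m)-\chi_n(\mathbf p_m)$, are bounded away from $0$ and $\infty$ because the alphabet is finite). I do not expect any genuine obstacle beyond this.
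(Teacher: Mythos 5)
Your proposal is correct and follows essentially the same route as the paper: the same block decomposition of $\{1,\dots,L_\delta(\iih,n)\}$, the same triangular system $\sum_{m=n}^d x_m\,\chi_n(\mathbf p_m)=-\log\delta+O(1)$, the same downward back-substitution yielding the recursion $C_n^{(d)}(\mathbf P)=\bigl(1-\sum_{m=n+1}^d C_m^{(d)}(\mathbf P)\chi_n(\mathbf p_m)\bigr)/\chi_n(\mathbf p_n)$, and the same rearrangement for the ``moreover'' identity. Your explicit verification that $C_n^{(d)}(\mathbf P)>0$ via the coordinate ordering condition is a worthwhile addition that the paper leaves implicit, but it does not change the argument.
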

\begin{proof}
For each $1\leq n\leq d$ and fixed $\mathbf{P}=(\mathbf{p}_d;\mathbf{p}_{d-1};\ldots; \mathbf{p}_1)\in\mathcal{T}_\delta$, it follows that
\begin{align*}
\log \delta &\approx  \sum_{m=n}^{d}\; \big(L_{\delta}(\iih,m)-L_{\delta}(\iih,m+1)\big) \frac{1}{L_{\delta}(\iih,m)-L_{\delta}(\iih,m+1)}\sum_{\ell=L_{\delta}(\iih,m+1)+1}^{L_{\delta}(\iih,m)} \log \lambda\big(\iu_\ell^{(n)}\big) \\
&=  -\sum_{m=n}^{d} \big(L_{\delta}(\iih,m)-L_{\delta}(\iih,m+1)\big)\cdot \chi_n(\mathbf{p}_m).
\end{align*}
In particular, for $n=d$ (recall $L_{\delta}(\iih,d+1)=0$ by definition), $\log \delta \approx -L_{\delta}(\iih,d)\cdot \chi_d(\mathbf{p}_d)$, giving $C_d^{(d)}(\mathbf{P})=1/\chi_d(\mathbf{p}_d)$. In the next step for $n=d-1$,
\begin{equation*}
L_{\delta}(\iih,d-1)-L_{\delta}(\iih,d) \approx \frac{-\big( \log \delta + L_{\delta}(\iih,d)\cdot \chi_{d-1}(\mathbf{p}_d) \big)}{\chi_{d-1}(\mathbf{p}_{d-1})} = \underbrace{\left( 1-\frac{\chi_{d-1}(\mathbf{p}_d)}{\chi_{d}(\mathbf{p}_d)} \right)\frac{-\log \delta}{\chi_{d-1}(\mathbf{p}_{d-1})}}_{=:-C_{d-1}^{(d)}(\mathbf{P})\cdot\log \delta}.
\end{equation*}
The argument continues by induction as $n$ decreases further. After rearranging,
\begin{align}
L_{\delta}(\iih,n)-L_{\delta}(\iih,n+1) &\approx \frac{-1}{\chi_n(\mathbf{p}_n)}\bigg( \log \delta + \sum_{m=n+1}^{d} \big(L_{\delta}(\iih,m)-L_{\delta}(\iih,m+1)\big)\cdot \chi_n(\mathbf{p}_m) \bigg) \nonumber \\
&= \!\bigg(\! 1-\!\sum_{m=n+1}^{d} C_m^{(d)}(\mathbf{P})\cdot \chi_n(\mathbf{p}_m)\! \bigg) \frac{-\log \delta}{\chi_n(\mathbf{p}_n)} =: -C_n^{(d)}(\mathbf{P}) \cdot\log \delta. \label{eq:35}
\end{align}
The final assertion follows simply by applying the definition of $C_{n+1}^{(d)}(\mathbf{P})$:
\begin{multline*}
\sum_{m=n+1}^{d}C_m^{(d)}(\mathbf{P})\cdot \chi_{n+1}(\mathbf{p}_m) = C_{n+1}^{(d)}(\mathbf{P})\cdot \chi_{n+1}(\mathbf{p}_{n+1}) + \sum_{m=n+2}^{d}C_m^{(d)}(\mathbf{P})\cdot \chi_{n+1}(\mathbf{p}_m)  \\
= \bigg( 1-\sum_{m=n+2}^{d}C_m^{(d)}(\mathbf{P})\cdot \chi_{n+1}(\mathbf{p}_m) \bigg) \frac{\chi_{n+1}(\mathbf{p}_{n+1})}{\chi_{n+1}(\mathbf{p}_{n+1})} + \sum_{m=n+2}^{d}C_m^{(d)}(\mathbf{P})\cdot \chi_{n+1}(\mathbf{p}_m) =1.
\end{multline*} 
\end{proof}

\section{Proof of Theorems~\ref{thm:main} and \ref{thm:main2}}\label{sec:4}

We begin with the proof of Theorem~\ref{thm:main} and then show what adjustments need to be made to the argument to prove Theorem~\ref{thm:main2}.

\subsection{Proof of Theorem~\ref{thm:main}}\label{sec:40}

Let $\mathcal{P}_{1,\ldots,d}$ denote the set of all $\#\mathcal{I}_d+\#\mathcal{I}_{d-1}+\ldots+\#\mathcal{I}_1$ dimensional vectors $\mathbf{P}=(\mathbf{p}_d;\mathbf{p}_{d-1};\ldots; \mathbf{p}_1)$, where each $\mathbf{p}_n$ is a probability vector on $\mathcal{I}_n$. We are ready to state our variational formula for $\dim_{\mathrm B}\Lambda_d$.
\begin{prop}\label{prop:1}
Let $\Lambda_{d}$ be a Gatzouras--Lalley sponge in $\R^d$. Then
\begin{equation}\label{eq:33}
\dim_{\mathrm B} \Lambda_{d} = \max_{\mathbf{P}\in \mathcal{P}_{1,\ldots,d}}\; \sum_{n=1}^d C_n^{(d)}(\mathbf{P})\cdot H(\mathbf{p}_n),
\end{equation}
where $C_n^{(d)}(\mathbf{P})$ is defined in Lemma~\ref{lem:2}. 
\end{prop}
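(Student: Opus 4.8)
The plan is to run the method-of-types skeleton of Section~\ref{sec:11}, now with the multidimensional types of Section~\ref{sec:32}. By the discussion of approximate cubes, the COSC~\eqref{eq:20} gives $N_{\delta}(\Lambda_d)\approx\#\mathcal{B}_\delta$, so it suffices to estimate $\#\mathcal{B}_\delta$; partitioning $\mathcal{B}_\delta$ into the type classes $T_\delta(\mathbf{P})$, $\mathbf{P}\in\mathcal{T}_\delta$, and letting $T_\delta^*$ be the largest of them, \eqref{eq:00} reduces the task to (i)~a crude bound on $\#\mathcal{T}_\delta$ and (ii)~sharp two-sided bounds on $\#T_\delta(\mathbf{P})$ for a fixed type. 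For (i), since all contraction ratios lie in a fixed compact subinterval of $(0,1)$, every $\delta$-stopping satisfies $L_\delta(\ii,n)\lessapprox -\log\delta$, so each of the $d$ blocks in the decomposition~\eqref{eq:30} has length $\lessapprox -\log\delta$; applying~\eqref{eq:01} blockwise gives $\#\mathcal{T}_\delta\lessapprox(-\log\delta)^{\#\mathcal{I}_1+\cdots+\#\mathcal{I}_d}$, whence $\log\#\mathcal{T}_\delta=o(-\log\delta)$ and the factor $\#\mathcal{T}_\delta$ in~\eqref{eq:00} is invisible to the dimension.

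For (ii), the bijection~\eqref{eq:30} shows that an element of $T_\delta(\mathbf{P})$ is obtained by choosing, independently for each $1\leq n\leq d$, a word of length $\ell_n:=L_\delta(\ii,n)-L_\delta(\ii,n+1)$ over the alphabet $\mathcal{I}_n$ with type exactly $\mathbf{p}_n$; the choices really are free because every element of $\mathcal{I}_n$ is the $n$-th projection of some element of $\mathcal{I}_d$. Within a type class the $\ell_n$ are not free parameters: fixing the $\mathbf{p}_m$'s makes each product in~\eqref{eq:34} explicitly computable, producing a triangular system for $\ell_d,\ldots,\ell_1$ whose solution is exactly Lemma~\ref{lem:2}, so $\ell_n=-C_n^{(d)}(\mathbf{P})\log\delta$ up to an additive $O(1)$. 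Applying~\eqref{eq:02} in each block and multiplying,
\[
\#T_\delta(\mathbf{P})\ \approx\ \prod_{n=1}^d e^{\ell_n H(\mathbf{p}_n)}\ \approx\ \delta^{-\sum_{n=1}^d C_n^{(d)}(\mathbf{P})\,H(\mathbf{p}_n)},
\]
where the suppressed factors are polynomial in $-\log\delta$, hence sub-polynomial in $\delta^{-1}$. Feeding this and (i) into~\eqref{eq:00}, taking logarithms, dividing by $-\log\delta$ and letting $\delta\to0$ shows that $\dim_{\mathrm B}\Lambda_d$ equals $\lim_{\delta\to0}\max_{\mathbf{P}\in\mathcal{T}_\delta}\Phi(\mathbf{P})$, where $\Phi(\mathbf{P}):=\sum_{n=1}^d C_n^{(d)}(\mathbf{P})\,H(\mathbf{p}_n)$, as soon as this limit is shown to exist.

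The last step is to replace $\mathcal{T}_\delta$ by $\mathcal{P}_{1,\ldots,d}$. First, $\Phi$ is continuous on the compact set $\mathcal{P}_{1,\ldots,d}$: the exponents $\chi_n(\mathbf{p}_n)$ are bounded away from $0$, and the coordinate ordering condition~\eqref{eq:21} gives $\chi_n(\mathbf{p}_m)\leq\chi_{n+1}(\mathbf{p}_m)$, which via the recursion~\eqref{eq:35} keeps every $C_n^{(d)}$ continuous and nonnegative. Since $\mathcal{T}_\delta\subseteq\mathcal{P}_{1,\ldots,d}$ one has $\max_{\mathcal{T}_\delta}\Phi\leq\max_{\mathcal{P}_{1,\ldots,d}}\Phi$; for the reverse I would show that $\mathcal{T}_\delta$ becomes dense in $\mathcal{P}_{1,\ldots,d}$ as $\delta\to0$, so that a maximiser $\mathbf{P}^*$ is approached by types $\mathbf{P}_\delta\in\mathcal{T}_\delta$ and $\liminf_{\delta\to0}\max_{\mathcal{T}_\delta}\Phi\geq\Phi(\mathbf{P}^*)$. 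Combining the two inequalities proves~\eqref{eq:33}. I expect this density statement to be the main obstacle: the block lengths are not inputs but outputs of the $\delta$-stoppings of whatever symbolic word one writes down, and their relative sizes are tied to the Lyapunov exponents, so realising a prescribed profile $\mathbf{P}^*$ means building $\ii$ as a concatenation of long ``model blocks'' — for $n=d,d-1,\ldots,1$ in that order, a word over $\mathcal{I}_n$ of controlled length whose type is close to $\mathbf{p}_n^*$, lifted to $\mathcal{I}_d$ — and then verifying that the stoppings of the resulting word land where the recursion of Lemma~\ref{lem:2} predicts, so that $\tau_\delta(\ii)\to\mathbf{P}^*$. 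Everything else is routine bookkeeping with~\eqref{eq:01},~\eqref{eq:02} and Lemma~\ref{lem:2}.
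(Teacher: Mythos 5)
Your proposal follows essentially the same route as the paper: blockwise application of~\eqref{eq:01} and~\eqref{eq:02} to the decomposition~\eqref{eq:30}, conversion of block lengths via Lemma~\ref{lem:2}, the sandwich~\eqref{eq:00}, and then density of $\mathcal{T}_\delta$ in $\mathcal{P}_{1,\ldots,d}$ plus continuity and compactness to pass to the maximum over all probability vectors. The only differences are cosmetic: your count of $\#\mathcal{T}_\delta$ omits the polynomial factor for the number of possible block-length tuples (harmless, still $o(\delta^{-1})$), and you are more explicit than the paper about why $C_n^{(d)}$ is nonnegative and about how one would actually verify the density of $\mathcal{T}_\delta$, which the paper asserts without proof.
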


\begin{proof}
The main step is to apply the method of types to the multi-dimensional type $\mathbf{P}\in \mathcal{T}_\delta$. For any type $\mathbf{P}\in \mathcal{T}_\delta$, we repeatedly use~\eqref{eq:02} for each $\mathbf{p}_d,\mathbf{p}_{d-1},\ldots, \mathbf{p}_1$ to get that
\begin{multline*}
\mathrm{exp}\left[ \sum_{n=1}^d (L_{\delta}(\iih,n)-L_{\delta}(\iih,n+1))H(\mathbf{p}_n)\right] \cdot \prod_{n=1}^d \big(L_{\delta}(\iih,n)-L_{\delta}(\iih,n+1)+1\big)^{-\#\mathcal{I}_n}  \\
\lessapprox \#T_\delta(\mathbf{P})  \lessapprox 
\mathrm{exp}\left[ \sum_{n=1}^d (L_{\delta}(\iih,n)-L_{\delta}(\iih,n+1))H(\mathbf{p}_n)\right],
\end{multline*}
where $\iih\in\Sigma$ is such that $\tau_\delta(\iih)=\mathbf{P}$. From Lemma~\ref{lem:2} it follows that
\begin{equation}\label{eq:31}
\delta^{-\sum_{n=1}^d C_n^{(d)}(\mathbf{P})\cdot H(\mathbf{p}_n)}\cdot ( -\log \delta)^{-\sum_{n=1}^d \#\mathcal{I}_n}\lessapprox \#T_\delta(\mathbf{P})  \lessapprox \delta^{-\sum_{n=1}^d C_n^{(d)}(\mathbf{P})\cdot H(\mathbf{p}_n)}.
\end{equation}
On the other hand, we can give a crude upper bound for $\#\mathcal{T}_{\delta}$ in a similar fashion by repeating the argument in the self-similar case~\eqref{eq:05} for each coordinate $1\leq n\leq d$:
\begin{equation}\label{eq:32}
\# \mathcal{T}_\delta \leq \prod_{n=1}^d \big(L_{\delta}(\iih,n)-L_{\delta}(\iih,n+1)+1\big)^{\#\mathcal{I}_n}\cdot \max_{B_\delta(\iih)\in\mathcal{B}_\delta}(L_{\delta}(\iih,n)-L_{\delta}(\iih,n+1)).
\end{equation}
Again by Lemma~\ref{lem:2}, the right hand side in~\eqref{eq:32} is $o(\delta^{-1})$.
It follows from~\eqref{eq:31} that the dominant box counting type $\mathbf{P}^*_{\delta}=(\mathbf{p}_{\delta,d}^*;\mathbf{p}_{\delta,d-1}^*;\ldots; \mathbf{p}_{\delta,1}^*)\in\mathcal{T}_\delta$ maximises the expression $\sum_{n=1}^d C_n^{(d)}(\mathbf{P})\cdot H(\mathbf{p}_n)$. Thus, combining \eqref{eq:31} and \eqref{eq:32} with~\eqref{eq:00} implies that
\begin{equation*}
(-\log \delta)\cdot \sum_{n=1}^d  C_n^{(d)}(\mathbf{P}^*_\delta)\cdot H(\mathbf{p}^*_{n,\delta})- \varepsilon  \lessapprox \log N_{\delta}(\Lambda) \lessapprox (-\log \delta)\cdot\sum_{n=1}^d  C_n^{(d)}(\mathbf{P}^*_\delta)\cdot H(\mathbf{p}^*_{n,\delta}) + \varepsilon,
\end{equation*}
where the error term $\varepsilon=o(-\log \delta)$. As a result, we obtain the variational formula
\begin{equation*}
\dim_{\mathrm B} \Lambda_{d} = \lim_{\delta\to 0}\; \sum_{n=1}^d  C_n^{(d)}(\mathbf{P}^*_\delta)\cdot H(\mathbf{p}^*_{n,\delta}).
\end{equation*}
As $\delta\to 0$, the set of types $\mathcal{T}_\delta$ becomes dense in the set $\mathcal{P}_{1,\ldots,d}$. The compactness of $\mathcal{P}_{1,\ldots,d}$ and the continuity of $C_n^{(d)}(\mathbf{P})$ and $H(\mathbf{p}_n)$ implies that the dominant box counting type $\mathbf{P}^*_{\delta}$ tends to the limiting dominant type $\mathbf{P}^*=(\mathbf{p}_{d}^*;\mathbf{p}_{d-1}^*;\ldots; \mathbf{p}_{1}^*)\in\mathcal{P}_{1,\ldots,d}$, which satisfies
\begin{equation}\label{eq:41}
\sum_{n=1}^d C_n^{(d)}(\mathbf{P}^*)\cdot H(\mathbf{p}^*_n) = \max_{\mathbf{P}\in \mathcal{P}_{1,\ldots,d}}\; \sum_{n=1}^d C_n^{(d)}(\mathbf{P})\cdot H(\mathbf{p}_n) = \dim_{\mathrm B} \Lambda_{d}.
\end{equation}
\end{proof}

\begin{remark}
It is possible to express the formula in~\eqref{eq:33} in terms of Lyapunov exponents. In particular, for $d=2$ one obtains the formula already presented in~\eqref{eq:23}. For $d=3$, slightly more work shows that the expression to be maximised is
\begin{equation}\label{eq:40}
\frac{H(\mathbf{p}_3)}{\chi_3(\mathbf{p}_3)} + \left( 1-\frac{\chi_2(\mathbf{p}_3)}{\chi_3(\mathbf{p}_3)} \right)\frac{H(\mathbf{p}_2)}{\chi_2(\mathbf{p}_2)} + \left[ 1-\frac{\chi_1(\mathbf{p}_3)}{\chi_3(\mathbf{p}_3)} - \left( 1-\frac{\chi_2(\mathbf{p}_3)}{\chi_3(\mathbf{p}_3)} \right) \frac{\chi_1(\mathbf{p}_2)}{\chi_2(\mathbf{p}_2)}\right]\frac{H(\mathbf{p}_1)}{\chi_1(\mathbf{p}_1)}.
\end{equation}
We think of it as a Ledrappier--Young like formula for the box dimension, see Section~\ref{sec:5} for further discussion. For $d>3$ the calculations get increasingly involved and cumbersome. 
\end{remark}
The next lemma characterises the limiting dominant type $\mathbf{P}^*=(\mathbf{p}_{d}^*;\mathbf{p}_{d-1}^*;\ldots; \mathbf{p}_{1}^*)$ and concludes the proof of Theorem~\ref{thm:main}. 

\begin{lemma}\label{lem:1}
Let $\Lambda_{d}$ be a Gatzouras--Lalley sponge in $\R^d$. Then the maximum in~\eqref{eq:33} is uniquely attained by the type $\mathbf{P}^*=(\mathbf{p}_d^*;\mathbf{p}_{d-1}^*;\ldots; \mathbf{p}_1^*)$, where the probability vectors $\mathbf{p}_n^*=(p_n^*(\iu))_{\iu\in\mathcal{I}_n}$ are defined by
\begin{equation*}
p_1^*(i)=(\lambda(i))^{s_1} \;\;\text{ and }\;\; p_n^*(\iu)=\big(\lambda(\iu^{(1)})\big)^{s_1}\cdot \prod_{\ell=2}^n \big(\lambda(\iu^{(\ell)})\big)^{s_{\ell}-s_{\ell-1}} \text{ for } n=2,\ldots,n,
\end{equation*}
where $s_1\leq s_2\leq \ldots\leq s_d$ were introduced in~\eqref{eq:22}. Moreover, $\dim_{\mathrm B}\Lambda_d =  s_d.$
\end{lemma}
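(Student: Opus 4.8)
The plan is to solve the constrained optimisation problem in~\eqref{eq:33} by a nested application of Lagrange multipliers, peeling off one coordinate at a time starting from the top coordinate $d$, and then to verify that the maximiser found coincides with the $\mathbf{p}_n^*$ defined via~\eqref{eq:22}. The first step is to rewrite the objective $F(\mathbf{P}) = \sum_{n=1}^d C_n^{(d)}(\mathbf{P})\cdot H(\mathbf{p}_n)$ in a form that isolates the dependence on $\mathbf{p}_d$ cleanly. Using the recursion for $C_n^{(d)}(\mathbf{P})$ from Lemma~\ref{lem:2} — in particular $C_d^{(d)}(\mathbf{P}) = 1/\chi_d(\mathbf{p}_d)$ and the identity $\sum_{m=n+1}^d C_m^{(d)}(\mathbf{P})\chi_{n+1}(\mathbf{p}_m)=1$ — one sees that $\mathbf{p}_d$ enters $F$ only through the quantities $\chi_1(\mathbf{p}_d),\ldots,\chi_d(\mathbf{p}_d)$ and $H(\mathbf{p}_d)$, with the coefficients of the lower-level entropies $H(\mathbf{p}_n)$ for $n<d$ being ratios built from these. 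The key observation is that for \emph{fixed} values of the Lyapunov exponents $\chi_n(\mathbf{p}_d)$, the best choice of $\mathbf{p}_d$ is the one maximising $H(\mathbf{p}_d)$ subject to those linear constraints; and by the standard Gibbs/Lagrange computation this maximiser is a product-form vector $p_d(\iu) \propto \prod_{n=1}^d \lambda(\iu^{(n)})^{\theta_n}$ for suitable multipliers $\theta_n$.

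The second step is to recognise the correct normalisation. I would show that at the global maximum the optimal multipliers are exactly $\theta_1 = s_1$ and $\theta_n = s_n - s_{n-1}$ for $n=2,\ldots,d$, so that $p_d^*(\iu) = \lambda(\iu^{(1)})^{s_1}\prod_{\ell=2}^d \lambda(\iu^{(\ell)})^{s_\ell - s_{\ell-1}}$, and that the defining equations~\eqref{eq:22} are precisely the statements that these product-form vectors sum to $1$, i.e. that they are genuine probability vectors. The same structure then propagates downward: having fixed $\mathbf{p}_d = \mathbf{p}_d^*$, the residual optimisation over $(\mathbf{p}_{d-1};\ldots;\mathbf{p}_1)$ has exactly the same shape as the original problem one dimension lower (this is consistent with the compatibility $\Lambda_d^{(n)} = \Lambda_n$ noted in the remark after the definition), so by induction on $d$ the maximiser is $\mathbf{p}_n^*$ for every $n$. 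The base case $d=1$ is just the self-similar computation already carried out in Section~\ref{sec:11}, giving $\mathbf{p}_1^* = (\lambda(i)^{s_1})_i$ and value $s_1$.

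The third step is to evaluate $F(\mathbf{P}^*)$ and check it equals $s_d$. Plugging $\mathbf{p}_n^*$ into the formula, one computes $H(\mathbf{p}_n^*) = -\sum_{\iu} p_n^*(\iu)\big(s_1\log\lambda(\iu^{(1)}) + \sum_{\ell=2}^n (s_\ell - s_{\ell-1})\log\lambda(\iu^{(\ell)})\big) = s_1\chi_1(\mathbf{p}_n^*) + \sum_{\ell=2}^n (s_\ell - s_{\ell-1})\chi_\ell(\mathbf{p}_n^*)$, and then a telescoping manipulation using the identity $\sum_{m=n+1}^d C_m^{(d)}(\mathbf{P}^*)\chi_{n+1}(\mathbf{p}_m^*) = 1$ collapses $\sum_n C_n^{(d)}(\mathbf{P}^*)H(\mathbf{p}_n^*)$ to $s_d$. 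Uniqueness follows from strict concavity of the entropy on each simplex together with the fact that $F$ is strictly concave along the relevant directions once the Lyapunov exponents are pinned down; alternatively, uniqueness of the maximiser is inherited from uniqueness of the solution to the Lagrange system.

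The main obstacle I anticipate is the bookkeeping in the nested Lagrange argument: the objective couples $\mathbf{p}_d,\ldots,\mathbf{p}_1$ through the $C_n^{(d)}(\mathbf{P})$ in a way that is not jointly concave in the naive parametrisation, so one must be careful to optimise in the right order (top coordinate first) and to justify that fixing the upper-level exponents reduces the lower-level problem to a genuinely lower-dimensional instance of the same optimisation. Verifying that the stationarity equations for the multipliers are exactly~\eqref{eq:22}, rather than some reparametrised cousin, is the delicate bit; the telescoping identity from Lemma~\ref{lem:2} is the tool that makes it work, and getting its indices to line up with the exponent differences $s_\ell - s_{\ell-1}$ is where the real content lies.
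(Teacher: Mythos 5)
Your third step (the evaluation $\sum_n C_n^{(d)}(\mathbf{P}^*)H(\mathbf{p}_n^*)=s_d$ via $H(\mathbf{p}_n^*)=\sum_{\ell=1}^n(s_\ell-s_{\ell-1})\chi_\ell(\mathbf{p}_n^*)$ and the telescoping identity of Lemma~\ref{lem:2}) is exactly what the paper does. The gap is in your second step: you peel off the coordinates in the wrong order, and the induction you build on top of that is circular. The decoupling that makes a one-variable-at-a-time Lagrange argument work goes \emph{bottom-up}, not top-down. By~\eqref{eq:35}, the only term of the objective involving $\mathbf{p}_1$ is
\begin{equation*}
C_1^{(d)}(\mathbf{P})\cdot H(\mathbf{p}_1)=\Bigl(1-\sum_{m=2}^{d}C_m^{(d)}(\mathbf{P})\cdot\chi_1(\mathbf{p}_m)\Bigr)\frac{H(\mathbf{p}_1)}{\chi_1(\mathbf{p}_1)},
\end{equation*}
and the prefactor is independent of $\mathbf{p}_1$; hence the optimal $\mathbf{p}_1$ is $\mathbf{p}_1^*$ with value $s_1$ \emph{unconditionally}, i.e.\ regardless of $\mathbf{p}_2,\ldots,\mathbf{p}_d$. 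Substituting $s_1$, the $\mathbf{p}_2$-dependent part becomes a prefactor (free of $\mathbf{p}_2$) times $\bigl(H(\mathbf{p}_2)-s_1\chi_1(\mathbf{p}_2)\bigr)/\chi_2(\mathbf{p}_2)$, whose maximiser $\mathbf{p}_2^*$ is again independent of the remaining free vectors, and so on upward. In the top-down direction no such unconditional elimination exists: already for $d=2$, maximising~\eqref{eq:23} over $\mathbf{p}_2$ for fixed $\mathbf{p}_1$ amounts to maximising $\bigl(H(\mathbf{p}_2)-h\,\chi_1(\mathbf{p}_2)\bigr)/\chi_2(\mathbf{p}_2)$ with $h=H(\mathbf{p}_1)/\chi_1(\mathbf{p}_1)$, and the Gibbs-form maximiser genuinely depends on $h$, hence on $\mathbf{p}_1$. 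So the step ``having fixed $\mathbf{p}_d=\mathbf{p}_d^*$, induct on $d$'' presupposes you already know $\mathbf{p}_d^*$, which in your scheme can only be identified after the lower vectors have been optimised --- precisely what the induction is supposed to deliver.

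Two smaller points. First, the residual problem after freezing $\mathbf{p}_d$ is not ``exactly the same problem one dimension lower'': the constants $1$ in the recursion for $C_n^{(d-1)}$ get replaced by $1-C_d^{(d)}(\mathbf{P})\chi_n(\mathbf{p}_d)$, which vary with $n$, so the objective is a reweighted variant of the $(d-1)$-dimensional one; it happens that the maximiser is unaffected, but the reason is again that each prefactor is independent of the vector currently being optimised --- which is the bottom-up observation, not a consequence of $\Lambda_d^{(n)}=\Lambda_n$. Second, your first-stage reduction (fix the Lyapunov exponents of $\mathbf{p}_d$ and maximise entropy subject to those linear constraints) is sound and does force the Gibbs product form, but identifying the multipliers as $s_1, s_2-s_1,\ldots$ requires the values $s_1,\ldots,s_{d-1}$ produced by the lower-level optimisations, so it cannot be the first step. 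Reversing the order of elimination repairs the argument and recovers the paper's proof.
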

\begin{proof}
We start by showing that $\dim_{\mathrm B}\Lambda_d =  s_d$. Immediate calculations yield that
\begin{equation*}
H(\mathbf{p}_n^*) = \sum_{\ell=1}^n (s_{\ell}-s_{\ell-1})\cdot \chi_\ell(\mathbf{p}_n^*),
\end{equation*}
where we define $s_0:=0$. Substituting this into~\eqref{eq:41}, we see that $\dim_{\mathrm B}\Lambda_d$ equals
\begin{align*}
&\sum_{n=1}^d C_n^{(d)}(\mathbf{P}^*)\cdot \sum_{\ell=1}^n (s_{\ell}-s_{\ell-1})\cdot \chi_\ell(\mathbf{p}_n^*) = \sum_{\ell=1}^d (s_{\ell}-s_{\ell-1}) \sum_{n=\ell}^d C_n^{(d)}(\mathbf{P}^*)\cdot \chi_\ell(\mathbf{p}_n^*) \\
&\phantom{\sum_{n=1}^d}= s_d\cdot C_d^{(d)}(\mathbf{P}^*)\cdot \chi_{d}(\mathbf{p}_d^*) +\! \sum_{\ell=1}^{d-1} s_{\ell} \Big(\! \underbrace{ C_{\ell}^{(d)}(\mathbf{P}^*)\cdot \chi_\ell(\mathbf{p}^*_\ell) +\!\! \sum_{n=\ell+1}^d C_n^{(d)}(\mathbf{P}^*) \big( \chi_{\ell}(\mathbf{p}_n^*) - \chi_{\ell+1}(\mathbf{p}_n^*) \big) }_{=:A_\ell} \!\!\Big).
\end{align*}
Since $C_d^{(d)}(\mathbf{P}^*)=1/\chi_{d}(\mathbf{p}_d^*)$, it remains to show that $A_\ell=0$ for every $1\leq \ell\leq d-1$. Using the definition of $C_{\ell}^{(d)}(\mathbf{P}^*)$ from~\eqref{eq:35}, it follows that
\begin{equation*}
A_{\ell} =  \bigg( 1-\sum_{m=\ell+1}^{d}C_m^{(d)}(\mathbf{P}^*)\cdot \chi_{\ell}(\mathbf{p}^*_m) \bigg) \frac{\chi_{\ell}(\mathbf{p}^*_{\ell})}{\chi_{\ell}(\mathbf{p}^*_{\ell})} + \sum_{n=\ell+1}^d C_n^{(d)}(\mathbf{P}^*) \big( \chi_{\ell}(\mathbf{p}_n^*) - \chi_{\ell+1}(\mathbf{p}_n^*) \big) =0,
\end{equation*}
where the final equality follows from Lemma~\ref{lem:2}.
	
The maximising type $\mathbf{P}^*$ is obtained by repeated use of the Lagrange-multipliers method. For brevity, let $t(\mathbf{p}_d,\mathbf{p}_{d-1},\ldots,\mathbf{p}_1):=\sum_{n=1}^d C_n^{(d)}(\mathbf{P})\cdot H(\mathbf{p}_n)$. Note that $t(\mathbf{p}_d,\mathbf{p}_{d-1},\ldots,\mathbf{p}_1)$ depends on $\mathbf{p}_1$ only through the first term. More specifically, by~\eqref{eq:35},
\begin{equation*}
C_1^{(d)}(\mathbf{P})\cdot H(\mathbf{p}_1) = \bigg( 1-\sum_{m=2}^{d}C_m^{(d)}(\mathbf{P})\cdot \chi_{1}(\mathbf{p}_m) \bigg) \frac{H(\mathbf{p}_1)}{\chi_{1}(\mathbf{p}_{1})}.
\end{equation*}
The term in parenthesis is independent of $\mathbf{p}_{1}$, furthermore, as mentioned already in Section~\ref{sec:11}, the quotient $H(\mathbf{p}_1)/\chi_1(\mathbf{p}_1)$ is maximised precisely by $\mathbf{p}_1^*$ with value $s_1$.

The next step is to observe that $t(\mathbf{p}_d,\ldots,\mathbf{p}_2,\mathbf{p}_1^*)$ depends on $\mathbf{p}_2$ only through the first two terms. More specifically writing out these two terms, by~\eqref{eq:35},
\begin{equation*}
\bigg( 1-\sum_{m=3}^{d}C_m^{(d)}(\mathbf{P})\cdot \chi_{1}(\mathbf{p}_m) \bigg)\cdot s_1 + \bigg( 1-\sum_{m=3}^{d}C_m^{(d)}(\mathbf{P})\cdot \chi_{2}(\mathbf{p}_m) \bigg) \frac{ H(\mathbf{p}_2) - \chi_{1}(\mathbf{p}_2)\cdot s_1 }{\chi_{2}(\mathbf{p}_2)}.
\end{equation*}
The two terms in parenthesis are independent of $\mathbf{p}_2$, moreover, another use of the Lagrange-multipliers method shows that the quotient depending on $\mathbf{p}_2$ is maximised by $\mathbf{p}_2^*$ with value $s_2-s_1$. In general, at the $n$-th step one applies the Lagrange-multipliers method to the term in $t(\mathbf{p}_d,\ldots,\mathbf{p}_n,\mathbf{p}_{n-1}^*,\ldots,\mathbf{p}_1^*)$ that depends on $\mathbf{p}_n$.
This concludes the proof.
\end{proof}


\subsection{Proof of Theorem~\ref{thm:main2}}\label{sec:41}

Without the coordinate ordering condition~\eqref{eq:21}, the study of Bara\'nski sponges is usually much more technical than the Gatzouras--Lalley case. However, for our box counting argument only one extra natural step is required.

The $\delta$-stopping of $\ii=(\iu_1\iu_2\ldots)\in\Sigma$ in the $n$-th coordinate (for $n=1,\ldots,d$) is the same as in~\eqref{eq:34} with the slightly modified notation:
\begin{equation*}
\prod_{\ell=1}^{L_{\delta}(\ii,n)} \lambda_n\big(\iu_{\ell,n}\big) \leq \delta < \prod_{\ell=1}^{L_{\delta}(\ii,n)-1}\lambda_n\big(\iu_{\ell,n}\big),
\end{equation*}
where $\iu_{\ell,n}$ denotes the $n$-th coordinate of $\iu_{\ell}$. The symbolic $\delta$-approximate cube containing $\ii\in\Sigma$ is the same as before:
\begin{equation*}
B_\delta(\ii) = \big\{ \jj\in\Sigma:\, \iu_{\ell,n}=\ju_{\ell,n}\; \text{ for every } \ell =1,\ldots,L_{\delta}(\ii,n)  \text{ and } n=1,\ldots,d\big\}.
\end{equation*}
Also, the approximate cubes partition $\Sigma$, and their images by the natural projection $\pi$ give an optimal $\delta$-cover of the attractor. Without the coordinate ordering condition, we do not know how the $L_{\delta}(\ii,n)$ compare to each other for a specific $B_{\delta}(\ii)$ like we did in~\eqref{eq:30} for the Gatzouras--Lalley case. Therefore, we sort the approximate cubes first.

Recall, $\mathrm{Sym}(\{1,\ldots, d\})$ denotes the symmetric group on the set of coordinates $\{1,\ldots,d\}$ and the notation for a permutation $\sigma$ from~\eqref{eq:24}. 
We say that a $\delta$-approximate cube $B_{\delta}(\ii)$ is \emph{$\sigma$-ordered} if 
\begin{equation*}
L_{\delta}(\ii,\sigma_d)\leq L_{\delta}(\ii,\sigma_{d-1})\leq \ldots\leq L_{\delta}(\ii,\sigma_1).
\end{equation*}
Potentially $B_{\delta}(\ii)$ can be $\sigma$-ordered for different permutations if the $\delta$-stopping is equal in multiple coordinates, but we will see in a moment that this is never a dominant box counting class. Let $\mathcal{B}_\delta(\sigma)$ denote the set of $\sigma$-ordered $\delta$-approximate cubes.

For a fixed $\sigma\in\mathrm{Sym}(\{1,\ldots, d\})$ at every scale $\delta$, the $\delta$-stoppings within $\mathcal{B}_\delta(\sigma)$ are ordered the same way, hence, we can identify $B_\delta(\ii)\in\mathcal{B}_\delta(\sigma)$ with the sequence
\begin{equation*}
\big\{ \iu_{1,\sigma_d},\ldots,\iu_{L_{\delta}(\ii,\sigma_d),\sigma_d}
\,;\,
\iu_{L_{\delta}(\ii,\sigma_d)+1,\sigma_{d-1}},\ldots,\iu_{L_{\delta}(\ii,\sigma_{d-1}),\sigma_{d-1}}
\,;\,\ldots\,;\, 
\iu_{L_{\delta}(\ii,\sigma_2)+1,\sigma_1},\ldots,\iu_{L_{\delta}(\ii,\sigma_1),\sigma_1}
\big\},
\end{equation*}
where a block is empty whenever $L_{\delta}(\ii,\sigma_n) = L_{\delta}(\ii,\sigma_{n+1})$. The type for an $\ii\in\mathcal{B}_\delta(\sigma)$ has the form 
$
\tau_{\delta}(\ii) = \big( \tau_{\delta}(\ii,\sigma_d)\,;\, \tau_{\delta}(\ii,\sigma_{d-1})\,;\,\ldots \,;\, \tau_{\delta}(\ii,\sigma_1) \big),
$
where $\tau_{\delta}(\ii,\sigma_n)$ is equal to 
\begin{equation*}
 \frac{1}{L_{\delta}(\ii,\sigma_n)-L_{\delta}(\ii,\sigma_{n+1})} \Big( \#\big\{L_{\delta}(\ii,\sigma_{n+1})+1\leq\ell\leq L_{\delta}(\ii,\sigma_n):\, \Pi(\iu_{\ell},\{\sigma_1^n\})=\underline{j}\big\}  \Big)_{\underline{j}\in\Pi(\mathcal{I};\{\sigma_1^n\})}
\end{equation*}
for $1\leq n\leq d$, where $\{\sigma_1^n\}=\{\sigma_1,\ldots,\sigma_n\}$. 
If $L_{\delta}(\ii,\sigma_n) = L_{\delta}(\ii,\sigma_{n+1})$, then the corresponding $C_n^{(d)}(\tau_{\delta}(\ii))=0$. Hence, from~\eqref{eq:33} of Proposition~\ref{prop:1} it follows that such a type can never be a dominant box counting type. Moreover, the number of different types with at least one empty block is certainly bounded from above by $o(\delta^{-1})$. Therefore, from the point of view of determining the box dimension, we can simply discard the approximate cubes in these type classes.

As a result, for any fixed $\sigma\in\mathrm{Sym}(\{1,\ldots, d\})$, we are essentially back in the GL case and can repeat the same argument. Within each $\mathcal{B}_\delta(\sigma)$ there is a dominant box counting type $\mathbf{P}^*_\delta(\sigma)=(\mathbf{p}_{\delta,d}^*;\mathbf{p}_{\delta,d-1}^*;\ldots; \mathbf{p}_{\delta,1}^*)$ which consists of probability vectors $\mathbf{p}_{\delta,n}^*$ on the index set $\Pi(\mathcal{I};\{\sigma_1^n\})$. As $\delta\to 0$, these vectors $\mathbf{p}_{\delta,n}^*$ converge to the ones defined by the equations in Theorem~\ref{thm:main2}. This is the limiting dominant type $\mathbf{P}^*(\sigma)$ which satisfies $\sum_{n=1}^d C_n^{(d)}(\mathbf{P}^*(\sigma))\cdot H(\mathbf{p}^*_n(\sigma))=s_d(\sigma)$. Thus, $\#T_\delta(\mathbf{P}^*_\delta(\sigma))\approx\delta^{-s_d(\sigma)+o(1)}$. Since there are just $d!$ different $\sigma$-orderings, we conclude that $\dim_{\mathrm B} \Lambda_{d} = \max_{\sigma\in\mathrm{Sym}(\{1,\ldots, d\})} s_d(\sigma)$.

\section{Further discussion}\label{sec:5}

This section provides some additional context to the results.

First consider Bedford--McMullen (or Sierpi\'nski) sponges. They are special cases of GL sponges because the diagonal matrices $A_{\iu}$ defining the maps $S_{\iu}$ are all the same and independent of $\iu$. Let $1>\lambda_1>\lambda_2>\ldots>\lambda_d>0$ denote the diagonal entries. Similarly to the homogeneous self-similar case, recall Remark~\ref{remark1}, the $\delta$-stoppings are independent of $\ii$ and $L_{\delta}(\ii,n)\approx \log \delta / \log \lambda_n$ for $1\leq n\leq d$. Hence, $C_n^{(d)}(\mathbf{P}) = 1/\log \lambda_n - 1/\log \lambda_{n+1}$ regardless of $\mathbf{P}$. Thus, Proposition~\ref{prop:1} implies that all we need to maximise in~\eqref{eq:33} is $H(\mathbf{p}_n)$ which is equal to $\log \#\mathcal{I}_n$ (attained by the uniform vector on the set $\mathcal{I}_n$). This is the formula obtained by Kenyon and Peres~\cite{KenyonPeres_ETDS96}.  

Another setup to which the method can be applied to is if we consider GL carpets in two dimensions defined by lower triangular matrices instead of diagonal matrices~\cite{BaranskiTriag_2008,KolSimon_TriagGL2019}. In this case the image of $[0,1]^2$ under any map of the IFS is a parallelogram with two vertical sides parallel with the $y$-axis. A simple lemma~\cite[Lemma 1.3]{KolSimon_TriagGL2019} states that the slope of the iterates of these parallelograms remain uniformly bounded. Hence, there is a uniform constant $C$ (depending only on the IFS) such that the image by $\pi$ of any $\delta$-approximate square on $\Lambda$ can be covered by at most $C$ squares of diameter $\delta$. As a result, $\#\mathcal{B}_\delta\approx N_{\delta}(\Lambda)$ still holds, so the box dimension remains unchanged.

Our variational formula~\eqref{eq:23} also provides a very clear argument for one of the necessary and sufficient conditions for the Hausdorff and box dimensions of GL carpets to agree. Gatzouras and Lalley~\cite{GatzourasLalley92} proved that the Hausdorff dimension satisfies the variational formula
\begin{equation}\label{eq:50}
\dim_{\mathrm H}\Lambda_2 = \max_{\mathbf{p}\in\mathcal{P}_{\mathcal{I}_2}}\; \frac{H(\mathbf{p})}{\chi_2(\mathbf{p})}  +
\left( 1- \frac{\chi_1(\mathbf{p})}{\chi_2(\mathbf{p})} \right) \frac{H(\mathbf{q}_{\mathbf{p}})}{\chi_1(\mathbf{q}_{\mathbf{p}})},
\end{equation}
where $\mathbf{q}_{\mathbf{p}}=(q_1,\ldots,q_{\#\mathcal{I}_1})$ denotes the probability vector on $\mathcal{I}_1$ defined by $q_i=\sum_{j\in\mathcal{I}(i)} p_{(i,j)}$. Comparing this with~\eqref{eq:23}, we immediately see that
\begin{equation*}
\dim_{\mathrm H} \Lambda_2 = \dim_{\mathrm B} \Lambda_2 \;\Longleftrightarrow\; \mathbf{q}_{\mathbf{p}_2^*} = \mathbf{p}_1^* \;\Longleftrightarrow\; \sum_{j\in\mathcal{I}(i)} \big(\lambda(i,j)\big)^{s_2-s_1} =1  \;\text{ for every } i\in\mathcal{I}_1.
\end{equation*}
This is referred to as the \emph{uniform fibre} case in the literature. The main result of Das and Simmons~\cite{das2017hausdorff} is that the variational formula~\eqref{eq:50} does not necessarily hold in higher dimensions. Instead, one needs to consider a wider class of measures, called \emph{pseudo-Bernoulli measures}, which are not invariant. 

The expression being maximised in~\eqref{eq:50} is a special case of the \emph{Ledrappier--Young formula} which holds in much higher generality for measures on self-affine sets~\cite{BARANYAnti201788,BaranyRams_TransAMS18,feng2020LYformulaArxiv,FengHu09} and has been a key technical tool in recent advancements in the dimension theory of self-affine sets and measures, see \cite{BaranyHochmanRapaport,hochmanRapaport2019arxiv,MorrisShmerkin_TransAMS19,Rapaport_TransAMS18} to name a few. In light of our result, it is natural to ask the following.
\begin{question}
Does a Ledrappier--Young like formula~\eqref{eq:23} hold more generally for the box dimension of self-affine sets on the plane? What about higher dimensions?
\end{question}
For three dimensions, the formula would be to maximise the expression in~\eqref{eq:40}. The general argument itself is very flexible. If the optimal $\delta$-cover of a set has a clear symbolic representation, then by defining a proper space of types it seems plausible to apply the method. The Bara\'nski case shows that some ``orientation'' of the boxes also plays a role. 

Overlapping systems could be particularly interesting to study from this vantage point. This is because it is still an open problem whether the box dimension of self-affine sets always exists, regardless of overlaps. It does not exist for all \emph{sub}-self-affine sets introduced in~\cite{KaenmakiVilppolainen_2010}, see the very recent example of Jurga~\cite{Jurga_DimBsubselfaffine_arxiv}. Moreover, for self-similar sets there is the folklore conjecture that the only reason why its (box) dimension can drop below its similarity dimension~\eqref{eq:07} is if the system has exact overlaps.






\subsection*{Acknowledgment}

The author was supported by a \emph{Leverhulme Trust Research Project Grant} (RPG-2019-034). 

\bibliographystyle{abbrv}
\bibliography{biblio_methodtypes}

\end{document}